\documentclass{amsart}
\usepackage{amsmath,amsfonts,amssymb}
\usepackage{amsthm}
\usepackage{ifthen}
\usepackage{longtable}
\usepackage{array}
\usepackage{url}
\usepackage{hyperref}

\newcommand{\Z}{\mathbb{Z}}
\newcommand{\Q}{\mathbb{Q}}
\newcommand{\R}{\mathbb{R}}
\newcommand{\C}{\mathbb{C}}

\newcommand{\ord}{\mathcal{O}}
\newcommand{\order}{\mathrm{ord}}

\renewcommand{\P}{\mathbb{P}}

\DeclareMathOperator{\Gal}{Gal} 

\DeclareMathOperator{\Hom}{Hom}

\newtheorem*{theorem*}{Theorem}
\newtheorem{theorem}{Theorem}
\newtheorem{lemma}{Lemma}
\newtheorem{corollary}{Corollary}
\newtheorem{proposition}{Proposition}

\theoremstyle{remark}

\begin{document}
\title[$S$-unit equations]{On the number of solutions to $S$-unit equations}
\subjclass[2020]{11D61, 11D45} \keywords{Diophantine equations; $S$-unit equations; Number of solutions}

\author[V. Ziegler]{Volker Ziegler}
\address{V. Ziegler,
University of Salzburg,
Hellbrunnerstrasse 34,
A-5020 Salzburg, Austria}
\email{volker.ziegler\char'100plus.ac.at}

\begin{abstract}
Let $K$ be a number field and $S$ a finite set of places including the archimedian ones. Let $(a_1,a_2,a_3)\in K^*$ we consider the $S$-unit equation $a_1x+a_2y=a_3$ with $x,y\in U_S$. In this paper we prove that for fixed $a_1,a_2$ the equation has at most $\frac{|S|-1}2$ solutions provided that the height of $a_3$ is sufficiently large. We want to emphasize that our results are effective.
\end{abstract}

\maketitle

\section{Introduction}

Let $K$ be a number field and $S$ a finite set of places including the archimedian ones. Let us denote by $U_S$ the set of $S$-units in $K$. Then for a fixed triple $(a_1,a_2,a_3)\in (K^*)^3$ we consider the $S$-unit equation
\begin{equation}\label{eq:SUeq}
 a_1x+a_2y=a_3,\qquad x,y\in U_S.
\end{equation}
More than a century ago Siegel \cite{Siegel:1921} proved that if $S$ consists of the archimedian primes, then Equation \eqref{eq:SUeq} has at most finitely many solutions $x,y\in \ord_K^*$ where $\ord_K$ is the ring of algebraic integers of $K$. Finiteness for the general case was proved by Lang \cite{Lang:1960} and effective results where subsequently proved by Gy\H{o}ry \cite{Gyoery:1974,Gyoery:1979}. For more on the history of unit equations and their results we refer to \cite{Evertse-Gyoery:UnitEq}.

Although it is known that the number of solutions to \eqref{eq:SUeq} can be large (e.g. see \cite{Erdos:1988}), in many cases one can prove that only few solutions exist, if not all of $a_1,a_2$ and $a_3$ are small. Let us make this statement more precise.

We call the triples $(a_1,a_2,a_3)$, $(a'_1,a'_2,a'_3)\in (K^*)^3$ $S$-equivalent, if there exist $\lambda\in K^*$, $S$-units $\epsilon_1,\epsilon_2, \epsilon_3\in U_S$ and a permutation $\sigma$ on the set $\{1,2,3\}$ such that
$$(\lambda \epsilon_1 a'_{\sigma(1)},\lambda \epsilon_2 a'_{\sigma(2)},\lambda \epsilon_3 a'_{\sigma(3)})=(a_1,a_2,a_3).$$
Note that if $(x,y)=(x_1,x_2)$ is a solution to $a_1x+a_2y=a_3$, then
$(x,y)=\left(\frac{x_{\sigma(1)}\epsilon_{\sigma(3)}}{x_{\sigma(3)}\epsilon_{\sigma(1)}},\frac{x_{\sigma(2)}\epsilon_{\sigma(3)}}{x_{\sigma(3)}\epsilon_{\sigma(2)}}\right)$ is a solution to $a'_1x+a'_2y=a'_3$, where we set $x_3=1$.

Denote by $\nu(a_1,a_2,a_3)$ the number of solutions to \eqref{eq:SUeq}. Then we have that $\nu(a_1,a_2,a_3)=\nu(a'_1,a'_2,a'_3)$, if $(a_1,a_2,a_3)$ and $(a'_1,a'_2,a'_3)$ are $S$-equivalent. Although it has been shown by Erd\H{o}s, Stewart and Tijdeman \cite{Erdos:1988} that an $S$-unit can have many solutions, it has been shown by Evertse, Gy\H{o}ry, Stewart and Tijdeman~\cite{Evertse:1988}
that most $S$-unit equations have only few solutions. For the rational case we refer to a result of Brindza and Gy\H{o}ry \cite{Brindza:1990} and a recent result due to Pint\'{e}r \cite{Pinter:2026}.

Let us have a closer look on the result due to Evertse, et.al. \cite{Evertse:1988}. They showed that if an $S$-unit Equation~\eqref{eq:SUeq} has at least $|S|+2$ solutions, then there exists a triple $(a'_1,a'_2,a'_3)$ equivalent to $(a_1,a_2,a_3)$ such that
$$\max\{h(a'_1),h(a'_2),h(a'_3)\}\leq C,$$
where $C$ is an effectively computable constant depending on $S$ and $K$. In this paper we prove a result similar to the results due to Evertse, Gy\H{o}ry, Stewart and Tijdeman~\cite{Evertse:1988}. In particular, the main result of this paper implies the following:

\begin{corollary}\label{cor:main}
  Assume that $\nu(a_1,a_2,a_3)\geq \frac{|S|+1}2$ and $a_1/a_2,a_1/a_3,a_2/a_3\not\in U_S$, then $(a_1,a_2,a_3)$ is $S$-equivalent to a triple $(a'_1,a'_2,a'_3)$, with $h(a'_3)\leq C$, where $C$ is an explicitly computable constant depending only on $a_1$, $a_2$, $S$ and $K$.
\end{corollary}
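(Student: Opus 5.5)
Since the corollary is stated as a consequence of the main result, I would argue directly, in the way that main result presumably goes. Fix $a_1,a_2$. Suppose $(x_1,y_1),\dots,(x_m,y_m)$ are distinct solutions of \eqref{eq:SUeq} with $m\ge \frac{|S|+1}{2}$. The aim is to show that, after replacing $a_3$ by $a_3\epsilon$ for a suitable $\epsilon\in U_S$ (an $S$-equivalence), the height $h(a_3)$ is bounded in terms of $a_1,a_2,S,K$.

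The first normalisation uses the fact that $a_3$ is only defined up to $S$-units. Write $(a_3)$ as a product of prime ideals. Using a fixed set of representatives for the $S$-class group, the $S$-part can be killed and the exponents outside $S$ fixed. It then remains to bound, for each $v\in S$, the local size $|a_3|_v$ relative to an $S$-unit multiplier. Equivalently, one must show that the vector $(\log|a_3|_v)_{v\in S}$ lies within bounded distance of the $S$-unit lattice, which has rank $|S|-1$.

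The key step is local. For each solution $(x_i,y_i)$ and each $v\in S$, the ultrametric (or approximate archimedean) triangle inequality applied to $a_1x_i+a_2y_i=a_3$ shows that two of the three terms have comparable $v$-adic size. Which pair this is gives a ``type'' for each $v\in S$:
\begin{itemize}
\item[(A)] $|a_1x_i|_v\asymp|a_3|_v$ is large compared with $|a_2y_i|_v$;
\item[(B)] $|a_2y_i|_v\asymp|a_3|_v$ dominates;
\item[(C)] $|a_1x_i|_v\asymp|a_2y_i|_v\ge|a_3|_v$, i.e. cancellation.
\end{itemize}
In case (C), $|a_1x_i/(a_2y_i)+1|_v$ is small. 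Baker-type linear forms in logarithms (Matveev, Yu) then bound the depth of cancellation by $C\log(\text{height})$. With only polylogarithmic losses, each solution therefore yields $|S|$ approximate linear relations between $\log|x_i|_v$, $\log|y_i|_v$ and $\log|a_3|_v$.

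Next compare two distinct solutions $i\ne j$. Subtracting the equations gives
\[ a_1(x_i-x_j)=a_2(y_j-y_i). \]
The hypothesis $a_1/a_2,a_1/a_3,a_2/a_3\notin U_S$ excludes degenerate solutions: for instance $x=0$ is impossible, and it also rules out an $S$-unit relation that would produce infinitely many solutions of a fixed shape. The counting argument then runs as follows. If $h(a_3)$ exceeds every $S$-unit translate bound, the set of places where a given solution is of type (C) must be large, roughly of size at least $|S|/2$; otherwise the type (A)/(B) relations would pin $a_3$ to within bounded distance of an $S$-unit. Moreover, two different solutions cannot share a type-(C) place at full depth, because then $x_i/x_j$ or $y_i/y_j$ would be $v$-adically close to $1$, forcing a small unit height and contradicting the size of $h(a_3)$ through another Baker estimate. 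A pigeonhole count over the $|S|$ places with $m\ge(|S|+1)/2$ solutions then gives a contradiction unless $h(a_3)\le C$.

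I expect the main obstacle to be this combinatorial step. It requires showing that each solution genuinely needs about two ``private'' cancellation places, which is what would yield the sharp bound $\frac{|S|-1}{2}$. It also requires keeping the constants dependent only on $a_1,a_2,S,K$, and not on the solutions, when turning the Baker lower bounds into an upper bound for the height of the normalised $a_3$.
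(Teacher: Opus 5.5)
There is a genuine gap: your counting step is set up the wrong way round, and both of its key claims fail. First, a solution with few (or no) type-(C) places need not pin $a_3$. Take $K=\Q$, $S=\{\infty,2,3\}$, $a_1=1$, $a_2=5$, $x=2^k$, $y=3^m$ with $k,m$ large and $2^k$ much larger than $3^m$, and $a_3=2^k+5\cdot 3^m$. This solution is of type (B) at $2$ and of type (A) at $3$ and $\infty$, with no cancellation anywhere. Yet $\log N_S(a_3)=\log a_3$ is unbounded.

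Note that the trace-zero part of $(\log\|a_3\|_v)_{v\in S}$ can always be absorbed by an $S$-unit, cf.\ Lemma~\ref{lem:Dirichlet}(4). What must be bounded is $\log N_S(a_3)$, i.e.\ precisely the part of $(a_3)$ outside $S$, which you cannot simply ``fix''. Your type relations give $\log\|a_3\|_v\approx\log\|a_1x_i\|_v$ wherever $x_i$ is not small, and $\log\|a_3\|_v\approx\log\|a_2y_i\|_v$ wherever $y_i$ is not small. So by the product formula, $a_3$ is pinned as soon as the solution has no place of type (B) (use $x_i$) or no place of type (A) (use $y_i$); a mixture of (A) and (B) pins nothing.

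Second, solutions can and do share type-(C) places. Once Baker bounds the depth of cancellation, type (C) just means that $a_1x_i$, $a_2y_i$, $a_3$ are all of comparable size at $v$. Moreover, $x_iy_j/(x_jy_i)$ being $v$-adically close to $1$ at one place says nothing about its height. Also, the hypothesis is not there to exclude $x=0$, which is impossible anyway. Its role is to ensure that no subsum of $a_1x_i+a_2y_i-a_1x_j-a_2y_j=0$ vanishes ($a_1x_i=a_2y_j$ would give $a_1/a_2\in U_S$), so that the linear forms are nonzero.

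The correct picture is the reverse, and it is what the paper proves. Order the four terms of $a_1x_i+a_2y_i-a_1x_j-a_2y_j=0$ by $\|\cdot\|_v$, and apply Lemma~\ref{lem:Fu-Units-lin-form} to the ratio of the two largest, which is an $S$-unit times $\pm1$ or $\pm(a_1/a_2)^{\pm1}$. The third largest term is then within $C_3(v)AR_S\log H$ of the largest in logarithmic scale (Lemma~\ref{lem:4terms}). Consequently, at each $v\in S$ at most one of the $2n$ unknowns $x_1,y_1,\dots,x_n,y_n$ is small (Lemma~\ref{lem:one-excep}); in particular, all but at most one solution are of type (C) at $v$.

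So the ``private'' places are those of type (A)/(B). A solution that fails to pin $a_3$ needs one place where $x_i$ is small and a different one where $y_i$ is small, and no other unknown can be small at those places. Since $2n\ge|S|+1$, pigeonhole gives an unknown that is large at every $v\in S$.

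You also leave the $\log H$ losses uncontrolled. The paper handles them by dividing the homogeneous system by $x_1$, so that $\log X_v\ge 0$ for all $v\in S$. The product formula for the never-small unknown then yields $dX\le dC_4AR_S\log X$, which bounds $X$ in terms of $a_1,a_2,S,K$ only (Proposition~\ref{prop:sol-bound}). Finally $h(a_3x_1^{-1})\le h(a_1)+h(a_2)+h(y_1/x_1)+\log 2$, and $(a_1,a_2,a_3x_1^{-1})$ is $S$-equivalent to $(a_1,a_2,a_3)$.
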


In the next section we will introduce several notations and discuss some results from algebraic number theory concerning $S$-units. This will allow us to formulate a more concise statement, than provided by Corollary \ref{cor:main}. We will also formulate a related result concerning the problem which rational integers can be presented as the sum of two units. In Section \ref{sec:DioApprox} we will present results due to Matveev \cite{Matveev:2000} and Yu \cite{Yu:2007} on linear forms in (complex and $p$-adic) logarithms. We apply these results to $S$-unit equations and prove two key results (Lemma \ref{lem:4terms} and Lemma \ref{lem:4terms-conj}) that are essential in proving our main results. Hence, Theorem \ref{th:number} will be proved in Section~\ref{sec:number} and our result concerning the representation of rational integers as a sum of two units (Theorem \ref{th:unit-rep}) will be proved in Section \ref{sec:units}.

\section{Notations and Results}

Let $K$ be a number field of degree $d$ and discriminant $D_K$. We denote by $P_K$ be the set of places of $K$ and by $P_\Q=\mathbb P\cup \{\infty\}$ the set of places of $\Q$. We assume that the corresponding absolute values $P_\Q$ of $\Q$ are normalized as follows. For $p\in \P$ we assume that $|p|_p=p^{-1}$ and $|\cdot|_\infty$ is the usual absolute value. For $v\in P_K$ we denote by $|\cdot|_v$ the absolute value such that $|x|_v=|x|_p$ if $v$ lies above $p\in P_\Q$ and $x\in \Q$. Moreover, we write $\|\cdot\|_v=|\cdot|_v^{d(v)}$, where $d(v)=[K_v:\Q_p]/[K:\Q]$, $\Q_p$ is the completion of $\Q$ at $p$, and $K_v$ is the completion of $K$ at $v$. Note that by this choice the product formula holds:
\begin{equation}\label{eq:ProductFormula}
 \prod_{v\in P_K} \|\alpha\|_v =1 \qquad \alpha \in K^*.
\end{equation}
We define the absolute logarithmic Weil height by
$$
h(\alpha)=\frac 1{[K:\Q]} \sum_{v\in P_K} \max\{0,\log\|\alpha\|_v\},
$$
where $K$ is some number field with $\alpha\in K$. Note that indeed the definition of $h(\alpha)$ is independent of the choice of the ambient number field $K$.

Let us assume that $v_{\mathfrak p}\in P_K$ is finite and corresponds to the prime ideal $\mathfrak p|p$, which lies above the prime number $p$. Then we denote by $\order_{\mathfrak p}(\alpha)$ the exponent of $\mathfrak p$ with which $\mathfrak p$ divides the principal ideal $(\alpha)$. This implies $\order_{\mathfrak p}(\alpha)=\frac{-\log \|\alpha\|_v}{\log p}$.

Let us note that for a fixed number field $K$ of degree $d=[K:\Q]$ and a given constant $c$ there are at most finitely many $\alpha\in K^*$ such that $h(\alpha)\leq c$. Also note that $h(\alpha)=0$ holds if and only if $\alpha$ is a root of unity. Thus for a fixed number field $K$ there exists a positive constant $\delta_K$ such that $h(\alpha)\geq \delta_K/d$ for all $\alpha\in K^*$, with $\alpha$ not a root of unity. Due to a result of Voutier \cite{Voutier:1996} we can choose $\delta_K=\lambda(d):=\frac{2}{(\log (3d))^3}$, if $d\geq 2$. Since $\lambda(1)\geq \log 2$
we can choose
\begin{equation}\label{eq:Voutier-est}
 \delta_K=\lambda(d):=\frac{8}{9(\log (3d))^3}
\end{equation}
for all $d\geq 1$.

Let $S\subseteq P_K$ be finite with $|S|=s$ and assume that $S$ contains all archimedian places. Then $\alpha\in K$ is called an $S$-unit if $\|\alpha\|_v=1$ for all $v\not \in S$ respectively an $S$-integer if $\|\alpha\|_v\geq 1$ for all $v\not \in S$. The set of all $S$-units is denoted by $U_S$ and the set of all $S$-integers is denoted by $\ord_S$. If $S$ consists of all archimedian places, then $U_S=U_K$ is the group of ordinary units and $\ord_S=\ord_K$ is the ring of algebraic integers. We also define the $S$-norm by
$$N_S(\alpha):=\prod_{v\in S} \|\alpha\|_v.$$
Let us note that in case that $S$ is the set of all archimedian places, then we have $N_S(\alpha)=|N_{K/\Q}(\alpha)|$.

By Dirichlet's unit theorem (see e.g. \cite[Corollary, page 105]{Lang:AlgZT}) we know that for a given set of places $S=\{v_1,\dots,v_s\}$ there exist units $\eta_1,\dots,\eta_{s-1}$ and a root of unity $\zeta$ such that $U_S=\langle \zeta,\eta_1,\dots,\eta_{s-1}\rangle$. Then the $S$-Regulator is defined as the absolute value of one of the $s-1$ minors of the matrix $R=(\log \|\eta_i\|_{v_j})$. Note that the $S$-Regulator $R_S$ is independent of the choice of the minor and the fundamental system of units $\eta_1,\dots,\eta_{s-1}$.

According to Bugeaud and Gy\H{o}ry \cite{Bugeaud:1996} we can choose a fundamental system of units that will be useful for our applications.

\begin{lemma}\label{lem:Dirichlet}
Put
\begin{gather*}
c_1=c_1(d,s)=\frac{2\left((s-1)!\right)^2}{(2d)^{s-1}}, \qquad c_2=c_2(d,s)=c_1 \left(\frac{d}{\lambda(d)}\right)^{s-2}\\
c_3=c_3(d,s)=\frac{c_1 d^{s-1}}{\lambda(d)},\qquad c_4=c_4(d,s)=\frac{(s-1)^s}{2(\lambda(d))^{s-2}}.
\end{gather*}
Then there exists a system of fundamental units $\eta_1,\dots,\eta_{s-1}$ such that
\begin{enumerate}
 \item $\prod_{i=1}^{s-1} h(\eta_i)\leq c_1 R_S$,
 \item $h(\eta_i)\leq c_2 R_S$ for all $i=1,\dots,s-1$.
 \item Let $E=(e_{ij})$ be the inverse of one of the $s-1\times s-1$ submatrices of $R$. Then we have $|e_{ij}|\leq c_3$.
 \item For every $\alpha\in \ord_S\setminus\{0\}$ and every integer $n\geq 1$ there exists $\epsilon\in U_S$ such that
 $$h(\alpha\epsilon^n)\leq \frac{\log N_S(\alpha)}{d}+nc_4R_S.$$
 \end{enumerate}
\end{lemma}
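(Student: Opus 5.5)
The plan is the classical geometry-of-numbers argument on the logarithmic $S$-unit lattice, as in Bugeaud and Gy\H{o}ry \cite{Bugeaud:1996}. I have not checked that the bookkeeping reproduces exactly $c_1,\dots,c_4$.

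\textbf{Setting up the lattice and the gauge.} Let $S=\{v_1,\dots,v_s\}$ and consider the map $L:U_S\to\R^s$, $L(x)=(\log\|x\|_{v_1},\dots,\log\|x\|_{v_s})$. Its kernel is the roots of unity. By the product formula its image $\Lambda$ is a lattice of rank $s-1$ in the hyperplane $H=\{\sum x_j=0\}$. Projecting $H$ onto the first $s-1$ coordinates is an isomorphism, and the image of $\Lambda$ has covolume exactly $R_S$.

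For $x\in U_S$ the coordinates of $L(x)$ sum to $0$. Hence $d\,h(x)=\sum_j\max\{0,x_j\}=\tfrac12\sum_j|x_j|$, so up to the factor $d$ the height is half the $\ell^1$-norm on $H$. Let $B=\{y\in\R^{s-1}: \sum_{j<s}|y_j|+|\sum_{j<s}y_j|\le 2\}$, the projection of the unit ball of this gauge. It is a symmetric convex body, and I will compute a lower bound for its volume of the form $\mathrm{vol}(B)\ge 2^{s-1}/(s-1)!$ up to the appropriate factor.

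\textbf{Part (1): product bound.} Minkowski's second theorem gives successive minima $\mu_1\le\dots\le\mu_{s-1}$ with $\prod\mu_i\le 2^{s-1}R_S/\mathrm{vol}(B)$. The minima are realised by linearly independent units, but these need not form a basis of $\Lambda$. I will then apply the standard lemma (Cassels, or Mahler's basis lemma) that replaces independent lattice vectors $w_1,\dots,w_{s-1}$ by a basis $\eta_1,\dots,\eta_{s-1}$ with $\|\eta_i\|\le\max\{1,i/2\}\,\|w_i\|$. Multiplying the volume factor $(s-1)!$ by these factors gives the $((s-1)!)^2$ appearing in $c_1$, while the powers of $2$ and $d$ come from the normalisation $h=\frac1{2d}\|\cdot\|_1$.

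I expect this step to be the main obstacle: the transition from independent vectors to a genuine basis, together with the exact constant chasing.

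\textbf{Parts (2) and (3).} For part (2), every $\eta_j$ has infinite order, so Voutier's bound \eqref{eq:Voutier-est} gives $h(\eta_j)\ge\lambda(d)/d$. Dividing the product bound of (1) by the other $s-2$ factors yields $h(\eta_i)\le c_1R_S(d/\lambda(d))^{s-2}=c_2R_S$.

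For part (3), I will use Cramer's rule. Let $M$ be the chosen $(s-1)\times(s-1)$ submatrix, so $|\det M|=R_S$. Each entry $e_{ij}$ of $M^{-1}$ is then an $(s-2)$-minor divided by $R_S$. Bounding the minor by the product of the $\ell^1$-norms of its rows gives at most $\prod_{k\ne j}2d\,h(\eta_k)$. This is at most $(2d)^{s-2}c_1R_S/h(\eta_j)$, and the lower bound $h(\eta_j)\ge\lambda(d)/d$ brings it under $c_3R_S$; dividing by $R_S$ gives (3).

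\textbf{Part (4).} Since $\alpha\in\ord_S$, we have $\|\alpha\|_v\ge1$ for $v\notin S$. Then $d\,h(\alpha\epsilon^n)$ is controlled by the $S$-coordinates, because the places $v\notin S$ contribute nothing to $\max\{0,\log\|\cdot\|_v\}$. Write $L(\alpha)=\frac{\log N_S(\alpha)}{s}(1,\dots,1)+w$ with $w\in H$. Expand $-w=\sum_i t_iL(\eta_i)$ in the basis from (1), and choose integers $m_i$ with $|t_i-nm_i|\le n/2$. With $\epsilon=\prod\eta_i^{m_i}$, the vector $L(\alpha\epsilon^n)$ is the constant part plus $\sum_i (nm_i-t_i)L(\eta_i)$. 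Since $\log N_S(\alpha)\ge 0$, the height is then at most $\frac{\log N_S(\alpha)}d+\frac n2\sum_i h(\eta_i)$. The sum $\sum_i h(\eta_i)$ is at most $(s-1)$ times the bound on the largest $h(\eta_i)$ from (1)–(2), and this yields the constant $c_4$.
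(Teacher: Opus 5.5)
Your route (Minkowski's second theorem for the gauge $d\,h$ on the projected logarithmic lattice, Cassels' basis lemma, Voutier's bound, Cramer's rule) is the classical argument behind the Bugeaud--Gy\H{o}ry lemma that the paper simply cites. For (1) and (2) your bookkeeping comes out exactly: $\mathrm{vol}(B)\ge 2^{s-1}/(s-1)!$ gives $\prod_i\mu_i\le (s-1)!\,R_S$, and $\prod_{i=1}^{s-1}\max\{1,i/2\}=(s-1)!/2^{s-2}$ then gives $c_1$. Parts (3) and (4), however, do not reach the stated constants.

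In (3), bounding the $(s-2)$-minor by $\prod_{k\ne j}2d\,h(\eta_k)$ yields $|e_{ij}|\le (2d)^{s-2}c_1\,d/\lambda(d)=2^{s-2}c_3$, not $c_3$. The claim that this ``brings it under $c_3R_S$'' is an arithmetic slip. Neither the $\ell^1$ bound nor Hadamard's bound on the restricted rows removes this factor, since a restricted row can be $(D,-D,0,\dots,0)$ with $D=d\,h(\eta_k)$. The missing ingredient is a minor estimate that uses the fact that the rows $L(\eta_k)$ have coordinate sum $0$: if $x_1,\dots,x_m\in\R^s$ all have coordinate sum $0$, then every $m\times m$ minor of the matrix with rows $x_k$ is at most $\prod_k\tfrac12\|x_k\|_1$. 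To see this, write each $x_k$ as a nonnegative combination of vectors $e_p-e_q$ with total weight $\tfrac12\|x_k\|_1$ and expand by multilinearity. Then use that a matrix whose rows have at most one entry $+1$, at most one entry $-1$ and zeros elsewhere is totally unimodular. This bounds the minor by $\prod_{k\ne j}d\,h(\eta_k)$ and gives exactly $c_3$.

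In (4), the basis from (1) together with (2) gives the constant $\tfrac12(s-1)c_2=\frac{(s-1)((s-1)!)^2}{2^{s-1}d\,\lambda(d)^{s-2}}$. This is at most $c_4$ only when $((s-1)!)^2\le 2^{s-2}d\,(s-1)^{s-1}$, which fails for large $s$; for example $d=1$, $s=10$ gives $(9!)^2\approx 1.32\cdot 10^{11}>2^8\cdot 9^9\approx 9.9\cdot 10^{10}$. The factor $((s-1)!)^2/2^{s-2}$ paid in the basis lemma is too expensive here, and it is also unnecessary, because $\epsilon$ only has to be some $S$-unit. Reduce instead modulo $n$ times the sublattice generated by the independent units $w_1,\dots,w_{s-1}$ realising the successive minima; their vectors $L(w_i)$ still span $H$. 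Minkowski gives $\prod_i d\,h(w_i)\le (s-1)!\,R_S$, hence $h(w_i)\le h(w_{s-1})\le \frac{(s-1)!}{d\,\lambda(d)^{s-2}}R_S$. Your reduction argument then yields $h(\alpha\epsilon^n)\le \frac{\log N_S(\alpha)}{d}+\frac{n(s-1)\,(s-1)!}{2d\,\lambda(d)^{s-2}}R_S\le\frac{\log N_S(\alpha)}{d}+nc_4R_S$, since $(s-1)!\le (s-1)^{s-1}$.

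Finally, (4) needs $\|\alpha\|_v\le 1$ for $v\notin S$. This is the standard notion of $S$-integer; the ``$\ge$'' in the paper's definition is a misprint. That inequality is what makes the places outside $S$ contribute nothing, and it is what gives $\log N_S(\alpha)\ge 0$. With $\|\alpha\|_v\ge1$, as you wrote it, both of these steps fail.
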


For a proof of statements (1)-(3) see Bugeaud and Gy\H{o}ry \cite[Lemma 1]{Bugeaud:1996}. Let us note that in the case that $S$ is the set of archimedian places a proof of statement~(4) can also be found in \cite[Lemma 2]{Bugeaud:1996}. However, this proof can be extended to any set of places as proofs of variants of statement (4) show (see e.g. \cite{Hajdu:1993} or \cite{Bugeaud:1998}).

Moreover, we note that Voutier's lower bound for the heigth \eqref{eq:Voutier-est} together with statement (1) of Lemma \ref{lem:Dirichlet} implies
\begin{equation}\label{eq:Reg-bound}
 R_S\geq \left(\frac{\lambda(d)}{d}\right)^{s-1} c_1^{-1}=\frac{1}{((s-1)!)^2}\cdot\left(\frac{16}{9(\log(3d))^3}\right)^{s-1}.
\end{equation}

The following lemma is similar to a statement due to Evertse et.al. \cite[Lemma~5]{Evertse:1988}.

\begin{lemma}\label{lem:S-normalized}
 Each $S$-equivalence class contains a triple $(a_1,a_2,a_3)$ with the following properties:
 \begin{enumerate}
  \item $a_1,a_2,a_3\in \ord_S\setminus\{0\}$,
  \item $N_S(a_1)\leq N_S(a_2)\leq N_S(a_3)$,
  \item $\prod_{v\not\in S}\max\{\|a_1\|_v,\|a_2\|_v,\|a_3\|_v\}\geq |D_K|^{-1/(2d)}$,
  \item $h(a_i):=\min_{x\in U_S}\{h(a_ix)\}\leq \frac{\log N_S(a_i)}{d}+c_4R_S$ for $i=1,2,3$.
 \end{enumerate}
\end{lemma}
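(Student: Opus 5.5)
Starting from an arbitrary triple in a given $S$-equivalence class, I will normalize it in three steps: first make the entries $S$-integers, then make the ideal they generate as small as possible, and finally reduce heights using Lemma~\ref{lem:Dirichlet}(4). None of these steps may disturb what was achieved earlier.

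\emph{Integrality.} Multiplying by a common $\lambda\in K^*$ leaves the class unchanged. Taking $\lambda$ a common denominator (for instance a suitable rational integer), we may assume $a_1,a_2,a_3\in\ord_S\setminus\{0\}$. The norms $N_S(a_i)$ are positive reals, so after applying a permutation $\sigma$ we have $N_S(a_1)\le N_S(a_2)\le N_S(a_3)$. Multiplying each $a_i$ by an $S$-unit does not change $N_S(a_i)$, and a common scalar $\lambda$ multiplies all three norms by the same factor $N_S(\lambda)$. Hence property (2) survives every later modification of the kind we make.

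\emph{Property (3).} Let $\mathfrak a$ be the fractional $\ord_S$-ideal generated by $a_1,a_2,a_3$. Then $\prod_{v\notin S}\max_i\|a_i\|_v$ equals the inverse $S$-part of the norm of $\mathfrak a$, suitably normalized by the exponent $1/d$. I will choose $\lambda$ so that $\lambda^{-1}\mathfrak a$ is an integral ideal of small norm. By Minkowski's bound every ideal class of $\ord_K$ contains an integral ideal $\mathfrak b$ with $N(\mathfrak b)\le\sqrt{|D_K|}$. Passing to $\ord_S$ only removes primes in $S$, so this bound persists. Take $\mathfrak b$ in the class of $\mathfrak a$ and write $\mathfrak a=\lambda\mathfrak b$. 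Replacing $a_i$ by $a_i/\lambda$ keeps the entries in $\ord_S$, since $a_i\in\mathfrak a=\lambda\mathfrak b\subseteq\lambda\ord_S$. The new generated ideal is $\mathfrak b$, so $\prod_{v\notin S}\max_i\|a_i\|_v=N(\mathfrak b)^{-1/d}\ge|D_K|^{-1/(2d)}$. The main care needed is to check this normalization of $\|\cdot\|_v$: each non-$S$ place contributes $N\mathfrak p^{-\order_{\mathfrak p}/d}$, which gives exactly the exponent $-1/(2d)$.

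\emph{Property (4).} Apply Lemma~\ref{lem:Dirichlet}(4) with $n=1$ to each $a_i$ separately. This gives $\epsilon_i\in U_S$ with $h(a_i\epsilon_i)\le\frac{\log N_S(a_i)}d+c_4R_S$. Replacing $a_i$ by $a_i\epsilon_i$ is allowed, because independent unit factors $\epsilon_i$ are part of the definition of $S$-equivalence. Multiplication by $S$-units preserves $S$-integrality, $N_S$, and all $\|a_i\|_v$ for $v\notin S$, so properties (1)--(3) remain intact. The minimum over $x\in U_S$ is therefore at most this bound, and in fact is attained by this choice.

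I expect only minor obstacles: verifying the Minkowski and ideal-norm normalization in (3), and confirming that the extension of Lemma~\ref{lem:Dirichlet}(4) to general $S$, which the paper takes as given, applies here.
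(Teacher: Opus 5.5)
Your proposal is correct and follows essentially the paper's argument: the paper's choice of $\delta\in\mathfrak d=(\alpha_1,\alpha_2,\alpha_3)^{-1}$ with $|N_{K/\Q}(\delta)|\le|D_K|^{1/2}N_{K/\Q}(\mathfrak d)$ is exactly your Minkowski step with $\lambda=\delta^{-1}$, and it is likewise followed by reordering by $N_S$ and a unit adjustment via Lemma~\ref{lem:Dirichlet}(4). One small correction: the $\epsilon_i$ supplied by Lemma~\ref{lem:Dirichlet}(4) need not attain $\min_{x\in U_S}h(a_ix)$, so, as the paper does, take $\epsilon_i$ minimizing $h(a_i\epsilon_i)$ (this minimum exists because $K$ has only finitely many elements of bounded height); its height is then at most the bound.
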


\begin{proof}
 We follow the ideas of Evertse et.al. in \cite[Lemma 5]{Evertse:1988}. Therefore, assume that $(\alpha_1,\alpha_2,\alpha_3)\in K^3$. Let $\mathfrak d$ be the inverse ideal of the (fractional) ideal generated by $\alpha_1,\alpha_2,\alpha_3$. There exists (see e.g. \cite[Chapter 1, Lemma 6.2]{Neukirch:AZT}) a $\delta\in \mathfrak d$ such that
 $$|N_{K/\Q}(\delta)|\leq |D_K|^{1/2} N_{K/\Q}(\mathfrak d).$$
 Put $\alpha_i'=\delta \alpha_i$ for $i=1,2,3$, then we have $\alpha_i'\in \ord_K\setminus\{0\}$ and we get 
 \begin{multline*}
 N_{K/\Q}((\alpha_1',\alpha_2',\alpha_3'))=|N_{K/\Q}(\delta)|N_{K/\Q}((\alpha_1,\alpha_2,\alpha_3))\\
 \leq |D_K|^{1/2} N_{K/\Q}(\mathfrak d) N_{K/\Q}((\alpha_1,\alpha_2,\alpha_3))=|D_K|^{1/2}. 
 \end{multline*} 
 Let $S_\infty\subseteq S$ be the set of archimedian places contained in $S$. Then, we deduce due to the product formula that 
 \begin{align*}
 N_{K/\Q}((\alpha_1',\alpha_2',\alpha_3'))&=\left(\prod_{\nu\not\in S_\infty} \max\{\|\alpha_1'\|_v, \|\alpha_2'\|_v,\|\alpha_3'\|_v\}\right)^{-d}\\
 &\leq \left(\prod_{\nu\not\in S} \max\{\|\alpha_1'\|_v, \|\alpha_2'\|_v,\|\alpha_3'\|_v\}\right)^{-d}.
 \end{align*}
 Altogether we obtain point (3) of the lemma.
 
 By a simple reordering we may assume that $N_S(\alpha'_1)\leq N_S(\alpha'_2)\leq N_S(\alpha'_3)$. Finally, we can choose $\epsilon_i\in U_S$ for $i=1,2,3$ such that for $a_i=\epsilon_i\alpha'_i$ we have
 $$h(a_i)=\min_{x\in U_S}\{h(xa_i)\}$$
 and due to Lemma \ref{lem:Dirichlet} we have that $h(a_i)\leq \frac{\log N_S(a_i)}{d}+c_4R_S$ for $i=1,2,3$. Note that by construction $\alpha'_i\in \ord_K$ and therefore $a_i\in \ord_S$. Also note that
 $$\prod_{v\not\in S} \max\{\|\alpha_1'\|_v, \|\alpha_2'\|_v,\|\alpha_3'\|_v\}=\prod_{v\not\in S} \max\{\|a_1\|_v, \|a_2\|_v,\|a_3\|_v\}.$$
\end{proof}

A triple $(a_1,a_2,a_3)$ that satisfies the properties of Lemma \ref{lem:S-normalized} is called $S$-normal\-ized. That is every $S$-equivalence class contains at least one $S$-normalized triple. Note that the definition of $S$-normalized differs from the notation $S$-normalized used in \cite{Evertse:1988}.

\begin{lemma}
 Assume that $(a_1,a_2,a_3)\in (K^*)^3$, with $\nu(a_1,a_2,a_3)>0$ and at least one of $a_1/a_2,a_1/a_3,a_2/a_3$ is an $S$-unit. Then $(a_1,a_2,a_3)$ is equivalent to an $S$-normalized triple of the form $(1,1,c)$ with $c\in \ord_S$. 
\end{lemma}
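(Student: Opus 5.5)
We first reduce to the shape $(1,1,c)$ using only the definition of $S$-equivalence, and then check the four conditions of Lemma \ref{lem:S-normalized}.

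\textbf{Step 1: reduction to $(1,1,c)$.} By a permutation we may assume $a_1/a_2=\epsilon\in U_S$. Put $\lambda=a_2^{-1}$, $\epsilon_1=\epsilon^{-1}$, $\epsilon_2=\epsilon_3=1$. Then $(a_1,a_2,a_3)$ is equivalent to $(1,1,c_0)$ with $c_0=a_3/a_2\in K^*$. The hypothesis $\nu>0$ passes to equivalent triples, so $x+y=c_0$ for some $x,y\in U_S$. In particular $c_0$ is an $S$-integer, being a sum of two $S$-units. It is nonzero because it lies in $K^*$.

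\textbf{Step 2: the remaining normalization conditions.} Condition (1) holds for $(1,1,c_0)$. For condition (3), note that $\max\{\|1\|_v,\|1\|_v,\|c_0\|_v\}=1$ for $v\notin S$, since $\|c_0\|_v\le 1$ there. So the product equals $1\ge |D_K|^{-1/(2d)}$.

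For condition (2), the orderings of the $S$-norms need care. The product formula gives $N_S(c_0)=\prod_{v\notin S}\|c_0\|_v^{-1}\ge 1=N_S(1)$, so the ordering $N_S(1)\le N_S(1)\le N_S(c_0)$ already holds with $c_0$ in the third slot.

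For condition (4), use Lemma \ref{lem:Dirichlet}(4) with $n=1$. It provides $\epsilon\in U_S$ with $h(c_0\epsilon)\le \log N_S(c_0)/d+c_4R_S$; choose $\epsilon$ to minimize $h(c_0\epsilon)$ over $U_S$. Set $c=c_0\epsilon$. Multiplying the third coordinate by an $S$-unit keeps the triple equivalent, keeps $c\in\ord_S$, and leaves $N_S$ and the local absolute values outside $S$ unchanged. For the entries $1$, we have $\min_{x\in U_S}h(x)=0\le c_4R_S$, since $\log N_S(1)=0$.

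\textbf{Remaining subtlety.} One point needs care: the statement's $h(a_i)$ is defined as a minimum over the $S$-unit class, so it is automatically the minimal height. We only need this minimum to be attained. This is the only mildly delicate step. It follows from the finiteness of elements of bounded height in $K$ (Northcott), which the paper notes earlier.

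Hence $(1,1,c)$ is $S$-normalized and equivalent to $(a_1,a_2,a_3)$, with $c\in\ord_S$.
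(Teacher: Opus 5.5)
Your proof is correct and follows the paper's argument: permute and rescale to $(1,1,c')$, use a solution $x+y=c'$ to get $c'\in\ord_S$, then multiply $c'$ by an $S$-unit of minimal height using Lemma~\ref{lem:Dirichlet}(4). You also explicitly check conditions (2) and (3) and that the minimum is attained, via Northcott finiteness; the paper leaves these points implicit.
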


\begin{proof}
 After a permutation we may assume that $a_2/a_1=u \in U_S$ that is $(a_1,a_2,a_3)$ is equivalent to $(1,u,a_3/a_1)$ and since $u\in U_S$ it is equivalent to 
 $(1,1,c')$ for some $c\in K^*$. Since $\nu(a_1,a_2,a_3)\geq 1$ we know that there are $x,y\in U_S$, with $x+y=c'$, hence $c'\in \ord_S$. Due to Lemma \ref{lem:Dirichlet} there is an $S$-unit $\epsilon$ such that $c=c'\epsilon$ satisfies 
 $$h(c)=\min_{x\in U_S}\{h(xc')\}\leq \frac{\log N_S(c)}d +c_4 R_S.$$
 With this choice $(1,1,c)$ is $S$-normalized.
\end{proof}

Let $S$ be a set of places and $(a_1,a_2,a_3)\in (K^*)^3$. Let us write 
$$A=\max\{1,h(a_1),h(a_2)\}.$$
If $S=S_\infty$ is exactly the set of archimedian primes, we set $P=1$. Otherwise we define
$$P=\max_{v\in S\setminus S_{\infty}}\{p\in \P \: :\: v|p\}.$$
For fixed  $(a_1,a_2,a_3)\in (K^*)^3$ let $(x_1,y_1),\dots,(x_n,y_n)$ be all solutions to \eqref{eq:SUeq}. We write
$$X=\max_{i=1,\dots,n}\{h(x_i),h(y_i)\}.$$
Now, we can state our main theorem. 

\begin{theorem}\label{th:number}
 Let $(a_1,a_2,a_3)\in (K^*)^3$ be an $S$-normalized triple. Assume that $(a_1,a_2,a_3)$ is not equivalent to $(1,1,c)$ for some $c\in \ord_S$.
 Let us define
 $$C=C(s,d,P)= 2.3\cdot 10^8 d (s+1)^{6.5}(\log(2(s+1)d))^2 P^d((s-1)!)^2\left(128e^2 d^2\right)^s.$$
 If $\nu(a_1,a_2,a_3)\geq \frac{|S|+1}2$, then we have
 $$\max\{X,h(a_3)\}\leq  3CR_SA \log\left(CR_SA\right).$$
  
 Suppose that $(1,1,c)$ with $c\in \ord_S$ is $S$-normalized such that $\nu(1,1,c)\geq |S|+1$. Then we have
 $$\max\{X,h(c)\}\leq 3CR_S \log\left(CR_S\right).$$
\end{theorem}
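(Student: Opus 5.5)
Given $n\ge (s+1)/2$ solutions $(x_i,y_i)$ of $a_1x+a_2y=a_3$, I would subtract pairs to eliminate $a_3$. For $i\neq j$,
\[
a_1(x_i-x_j)+a_2(y_i-y_j)=0 ,
\]
which gives a four-term $S$-unit relation $a_1x_i-a_1x_j+a_2y_i-a_2y_j=0$. Crucially, $a_3$ no longer appears, so every estimate will depend only on $A=\max\{1,h(a_1),h(a_2)\}$. This is where the key lemmas announced for Section~\ref{sec:DioApprox} (Lemma~\ref{lem:4terms} and Lemma~\ref{lem:4terms-conj}) should enter. I expect them to state roughly the following: if a solution has large height, then at some place $v\in S$ one of the quantities $\|a_1x_i/a_3\|_v$ or $\|a_2y_i/a_3\|_v$ is very small. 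Comparing two solutions that are small at the same place $v$, together with the four-term relation and the linear forms estimates of Matveev and Yu, should then force a contradiction unless $X\le CR_SA\log(CR_SA)$.

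The concrete plan is as follows.

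\textbf{Step 1.} Fix the fundamental system $\eta_1,\dots,\eta_{s-1}$ of Lemma~\ref{lem:Dirichlet}. Use statement (3) there, the inverse-regulator bound $|e_{ij}|\le c_3$, to convert $h(x_i)$ into a bound on the exponents. Conversely, this shows that a large $h(x_i)$ forces some $|\log\|x_i\|_v|$ with $v\in S$ to be of size at least about $h(x_i)/s$.

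\textbf{Step 2.} By the product formula, for each solution there is a place $v_i\in S$ where $\|a_1x_i/a_3\|_v$ or $\|a_2y_i/a_3\|_v$ is exponentially small in $h(x_i)$.

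\textbf{Step 3 (pigeonhole).} There are $n\ge (s+1)/2$ solutions, each contributing a pair (place, which term is small). The way the bound $(s+1)/2$ fits suggests the following refinement: each place can host at most one small term in a way that is consistent across solutions. With $2n\ge s+1$ small-term events (counting both $x$- and $y$-smallness, or using the symmetric relations obtained from the equivalences $\sigma$) over $s$ places, two solutions $i\neq j$ share a place $v$ with the same small coordinate.

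\textbf{Step 4.} At that shared place, the difference $a_1(x_i-x_j)=-a_2(y_i-y_j)$ yields a linear form
\[
\Lambda=\log\!\Big(\tfrac{a_1x_i}{a_2y_j}\Big)\ \text{or}\ \log\!\Big(\tfrac{x_i}{x_j}\Big)
\]
in at most $s+1$ logarithms, namely $\eta_1,\dots,\eta_{s-1}$ and $a_1/a_2$, with heights bounded by $c_2R_S$ and $2A$. This form is exponentially small at $v$. Matveev's bound (archimedean $v$) or Yu's bound ($p$-adic $v$, producing the factor $P^d$) gives a lower bound of the form $-CR_SA\log X$. Comparing the two bounds gives $X\le CR_SA\log X$, and the standard inversion lemma ($x\le a\log x\Rightarrow x\le 2a\log a$) yields the stated bound up to the factor $3$.

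\textbf{Step 5.} Finally, bound $h(a_3)$ using $h(a_3)\le h(a_1)+h(a_2)+h(x_1)+h(y_1)+\log 2$, which is absorbed into $3CR_SA\log(CR_SA)$.

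The hypothesis that the triple is not equivalent to $(1,1,c)$ is needed in Step 4. Otherwise $a_1/a_2\in U_S$, and the four-term relation can degenerate, for instance $x_i-x_j=0$ with nonvanishing subsums, or $(x,y)\leftrightarrow(y,x)$ symmetry making every solution appear twice. In that case the pigeonhole gains nothing from the $x$/$y$ split, so one needs $n\ge s+1$ solutions, the $(1,1,c)$ case. There $A=1$, giving $3CR_S\log(CR_S)$.

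The main obstacle is Step 3. One must make sure that the smallness events are really distinct across solutions, and that two solutions small at the same place give a genuinely nondegenerate small linear form, rather than one with vanishing subsums. I would first try to show that if two distinct solutions $i\neq j$ had the same small term at the same place, then $x_i/x_j$ is close to $1$ at $v$, and to bound $\log(x_i/x_j)$ directly by Matveev or Yu. This is where the constant $C$ should arise.
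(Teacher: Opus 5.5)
Your proposal has a genuine gap in Steps~3--4, which is exactly where the bound $(s+1)/2$ must come from.

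\textbf{The counting does not give $(s+1)/2$.} As Step~2 is set up, each solution yields \emph{one} small-term event: a place where $\lambda_i=a_1x_i/a_3$ or $\mu_i=a_2y_i/a_3$ is small. A collision among $n$ events in $s$ places then needs $n\ge s+1$, even allowing mixed coordinates. The count ``$2n\ge s+1$ events'' is never justified. Even $2n$ events in $s$ places only give two events at the same place, not with the same coordinate. This part can be repaired:
\begin{itemize}
\item if $h(\lambda_i)$ is large, then $\lambda_i$ and $\mu_i=1-\lambda_i$ are each small at some place of $S$, necessarily at different places;
\item a mixed collision $x_i$, $y_j$ gives the form $a_2y_i/(a_1x_j)-1$, which is nonzero because $a_1/a_2\notin U_S$.
\end{itemize}

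\textbf{The collision does not bound the maximum.} A collision argument is a contradiction against the assumption that \emph{every} solution is large. The Matveev/Yu lower bound at the shared place involves the heights of ratios such as $y_i/y_j$ or $a_2y_i/(a_1x_j)$. So the threshold for ``small'' has size about $CAR_S\log X$, where $X$ is the maximum over all solutions. What you actually obtain is that \emph{some} solution has height $O(CAR_S\log X)$. That bounds $h(a_3)$ in terms of $\log X$, but not $X$ itself. Your sentence ``comparing the two bounds gives $X\le CR_SA\log X$'' conflates the heights of different solutions.

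\textbf{The missing idea.} Reverse the pigeonhole, and compare with the maximum at each place rather than with $a_3$.
\begin{enumerate}
\item Lemma~\ref{lem:4terms} says that in each relation $a_1x_i+a_2y_i-a_1x_j-a_2y_j=0$, on a logarithmic scale the \emph{second smallest} of the four $v$-adic sizes is within $C_3(v)AR_S\log H$ of the largest.
\item Applied to all pairs, this gives Lemma~\ref{lem:one-excep}. At each $v\in S$, at most one of the $2n$ values $x_1,\dots,x_n,y_1,\dots,y_n$ lies more than $2C_3(v)AR_S\log X$ below $\log X_v$, where $X_v=\max_i\{\|x_i\|_v,\|y_i\|_v\}$.
\item With $2n\ge s+1$ values and only $s$ places, some single value $z$ is close to $X_v$ at \emph{every} place simultaneously.
\item Scale the whole tuple by $x_1^{-1}$, which preserves the homogeneous system, so that $x_1=1$. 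Then $\log X_v\ge 0$ and $\sum_{v\in S}\log X_v\ge dX$.
\item The product formula for $z$ gives $dX\le\sum_{v\in S}\log X_v\le dC_4AR_S\log X$. This bounds all heights at once, and Lemma~\ref{lem:pdw} finishes.
\end{enumerate}
Finally, $h(a_3)$ is bounded through the minimality $h(a_3)\le h(a_3x_1^{-1})$ coming from $S$-normalization. It is not bounded through $h(x_1)+h(y_1)$ as in your Step~5, since only the normalized heights are controlled.
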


Let us note that the second statement of Theorem \ref{th:number} is, up to improved constants, covered by the result due to Evertse, et.al. \cite{Evertse:1988}. Also note that the two statements of Theorem \ref{th:number} coincide, if we count the two solutions $x+y=c$ and $y+x=c$ as only one solution.

We also want to note that using ineffective methods (see \cite[Theorem 1]{Evertse:1988} or \cite[Theorem 6.1.6]{Evertse-Gyoery:UnitEq}) it can be shown that there are at most finitely many $S$-equivalence classes such that the $S$-unit equation \eqref{eq:SUeq} has more than two solutions.

Using our strategy it is also possible to prove the following related theorem.

\begin{theorem}\label{th:unit-rep}
 Let $K$ be a number field of degree $d$, unit group $U_K$ of rank $s-1$.  Let $a,b\in \Z$ be fixed positive integers with $\gcd(a,b)=1$. Assume that $n\in \Z$ has a representation of the form 
 \begin{equation}\label{eq:unit-rat} ax+by=n\end{equation}
 with $x,y\in U_K$. Let us define $A=\max\{1,\log|a|,\log|b|\}$ and let us write
 $$\tilde C=\tilde C(s,d)= 6.2\cdot 10^{12} s^{5.5}d^6\log(ed^2)((s-1)!)^4 \left(54(\log(3d))^3\right)^{2s-1} .$$
 Then we have
 \begin{itemize}
  \item $\log |n|\leq 5\tilde CAR_K^2\log\left(\tilde CAR_K^2\right)$ or
  \item $a=b=1$ and $x=\sigma(y)$ for some $\sigma \in \Gal(K/\Q)$.
 \end{itemize}
\end{theorem}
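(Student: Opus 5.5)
The plan is to treat $ax+by=n$ as a four-term relation by applying all embeddings of $K$. Since $n\in\Z$ is fixed by every $\sigma\in\Gal(\tilde K/\Q)$, where $\tilde K$ is the Galois closure, we get $ax+by=a\sigma(x)+b\sigma(y)$ for every embedding $\sigma$. This gives the vanishing sum
$$a x - a\sigma(x) + b y - b\sigma(y)=0$$
of four terms. This is exactly the situation Lemma \ref{lem:4terms-conj} is designed for. I intend to work with $S=S_\infty$, so $P=1$ and $U_S=U_K$.

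\textbf{Step 1 (large $n$ forces large units).} Assume $\log|n|$ exceeds the stated bound. Choose an archimedean embedding where $|n|=|ax+by|$. The product formula, applied to $x$ and $y$ as units, shows that $h(x)$ and $h(y)$ are at least of the size of $(\log|n|-A)/d$. Write $x=\zeta\prod\eta_i^{b_i}$ and $y=\zeta'\prod\eta_i^{c_i}$ in the fundamental system of Lemma \ref{lem:Dirichlet}. Part (3) of that lemma converts heights into bounds on the exponents, $B=\max|b_i|,|c_i|\ll c_3\,h$.

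\textbf{Step 2 (linear forms in logarithms).} Pick the embedding $\tau$ for which $|\tau(x)|$ is maximal. Then $|n|\le |a\tau(x)|+|b\tau(y)|$ forces one of the units to dominate, and there is a conjugate $\sigma$ with $|a\sigma(x)|$ or $|b\sigma(y)|$ very small compared with $|n|$. From the four-term relation, unless two terms cancel identically, I obtain a small linear form such as
$$\left|\log\frac{b\,y}{a\,\sigma(x)}\right| \quad\text{or}\quad \left|\log \frac{a x}{n}\right|,$$
each bounded by $\exp(-c\,h)$. Matveev's bound, in the form packaged in Lemma \ref{lem:4terms-conj}, gives a lower bound of the form
$$\exp\bigl(-C' A R_K^2\log B\bigr).$$
The factor $R_K^2$ arises because conjugates $\sigma(\eta_i)$ enter as additional logarithms; their heights equal $h(\eta_i)$, but the field degree doubles to $d^2$, which explains the $\log(ed^2)$, $d^6$ and the exponent $2s-1$ in $\tilde C$. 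Comparing the upper and lower bounds gives $h\le \tilde C A R_K^2\log(\tilde C A R_K^2)$ after the usual lemma that $x\le a\log x$ implies $x\le 2a\log a$. Since $\log|n|\le d\,h+A$, this contradicts the assumption, with the constant 5 absorbing the slack.

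\textbf{Step 3 (degenerate cancellations).} The lower bound fails only when the linear form vanishes, i.e.\ when a subsum of the four-term equation vanishes. I will analyse the cases in turn:
\begin{itemize}
\item $ax=a\sigma(x)$ for all $\sigma$: then $x$ and hence $y$ are rational units, i.e.\ $\pm1$, and $n$ is small.
\item $ax=b\sigma(y)$: since $x,y$ are units and $\gcd(a,b)=1$, taking norms gives $a^d=b^d$, so $a=b=1$ and $x=\sigma(y)$. This is the exceptional case of the theorem.
\item $ax+by=0$: this gives $n=0$.
\end{itemize}

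I expect the main obstacle to be Step 2. The difficulty is ensuring that some embedding actually produces a genuinely small, nondegenerate linear form in which the term $|n|$ does not spoil the estimate. This probably requires choosing $\sigma$ so that $|\sigma(x)|$ is minimal and using $\prod_\sigma|\sigma(x)|=1$, which gives $\min|\sigma(x)|\le e^{-h(x)}$. Handling the case where $x$ and $y$ are both small at the same embedding will need a careful split.
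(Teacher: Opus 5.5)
\textbf{The gap is Step 2.} That step is the heart of the argument, and you leave it open.

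Of your two candidate small quantities, $\log(ax/n)$, or any form containing $n$, cannot work. The height $h(n)=\log|n|$ is exactly what you are trying to bound, so Matveev's theorem only gives a lower bound of the shape $\exp(-c'\log|n|\,R_K\log B)$. Comparing this with the upper bound $\ll e^{A-h(y)}/|n|$ yields only $h(y)\ll \log|n|\,R_K\log B$, which is not a bound on $|n|$.

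Your other form, $\log\frac{by}{a\sigma(x)}$, is small only if \emph{two} of the four terms, $ax$ and $b\sigma(y)$, are simultaneously tiny compared with $a\sigma(x)$. Nothing in your plan produces that configuration. Choosing $\tau$ with $|\tau(x)|$ maximal only gives $|b\tau(y)/(a\tau(x))+1|\le |n|e^{-h(x)}$, which bounds $h(x)$ in terms of $\log|n|$ rather than bounding $\log|n|$.

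There is also a smaller issue in Step 3. The degeneracy $x=\sigma(x)$ happens for individual $\sigma$ fixing $\Q(x)$, not ``for all $\sigma$''. This is why the paper uses only the $\ell$ embeddings that are distinct on $M=\Q(x)=\Q(y)$.

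\textbf{How the paper closes Step 2.} The missing idea is global.
\begin{enumerate}
\item Assume $|n|\ge a+b$, so that $X^{(i)}=\max\{|x^{(i)}|,|y^{(i)}|\}\ge 1$ for all $i$.
\item Let $|y^{(\ell)}|$ be the smallest of all $|x^{(i)}|,|y^{(i)}|$.
\item For each pair $(i,\ell)$ the smallest term is $y^{(\ell)}$. Lemma \ref{lem:4terms-conj} then forces the second smallest term to be close to the largest, giving $\log|x^{(i)}|\ge \log X^{(i)}-\tilde C_3AR_K^2\log X$ for every $i$.
\item Sum over $i$ and use $\prod_i|x^{(i)}|=1$. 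With $z$ such that $h(z)=X=\max\{h(x),h(y)\}$, this gives $\ell h(z)\le\sum_i\log X^{(i)}\le \ell\tilde C_3AR_K^2\log X$.
\item Lemma \ref{lem:pdw} finishes.
\end{enumerate}

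\textbf{How your local approach could be rescued.} It works only with the right pair of embeddings.
\begin{enumerate}
\item Take $\sigma$ minimizing $|\sigma(x)|$ and $\tau$ minimizing $|\tau(y)|$. Then $|\sigma(x)|\le e^{-h(x)}$ and $|\tau(y)|\le e^{-h(y)}$.
\item These embeddings must differ on $x$: if $\sigma(x)=\tau(x)$, then $1\le|n|\le ae^{-h(x)}+be^{-h(y)}$, which fails once the heights are large.
\item From $b\sigma(y)-a\tau(x)=b\tau(y)-a\sigma(x)$ and $|a\tau(x)|\ge 1/2$ you get $|b\sigma(y)/(a\tau(x))-1|\le 4e^{A-\min\{h(x),h(y)\}}$.
\item This is a linear form free of $n$; it is Case C of Lemma \ref{lem:4terms-conj}. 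Its vanishing is exactly your exceptional case.
\item You must still pass from $\min\{h(x),h(y)\}$ to $\max\{h(x),h(y)\}$, since the larger height controls $B$. For example, use $\log|n|\le d\min\{h(x),h(y)\}+A+\log 2$ and $|h(x)-h(y)|\le \log|n|+2A+\log 2$.
\end{enumerate}
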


Let us note that Theorem \ref{th:unit-rep} is closely related to a result due to Brindza and Gy\H{o}ry \cite{Brindza:1990}. Brindza and Gy\H{o}ry proved a version of Theorem \ref{th:unit-rep} without explicitly describing the possible solution to \eqref{eq:unit-rat} in case that $a=b=1$. Recently R. Visser and the author \cite{Visser:2026} used another approach to obtain the explicit description of the possible solution to \eqref{eq:unit-rat} in the case that $a=b=1$, but they did not consider the case that $ab>1$. Therefore, we give a complete proof of Theorem \ref{th:unit-rep} using the method of Visser and the author (see Section \ref{sec:units}).

Using ineffective methods it can be shown (see \cite[Theorem 1]{Brindza:1990}) that at most finitely many triples $(a,b,c)\in \Z^3$ exist with $\gcd(a,b)=1$ such that  the unit Equation \eqref{eq:unit-rat} has a solution $(x,y)$ for which $\Q(x,y)$ is not contained in a real quadratic field.

\section{Results on Diophantine approximation and S-unit equations}\label{sec:DioApprox}

One of the main tools in our proof are lower bounds for linear forms in (complex and $p$-adic) logarithms. We start with the following result due to Matveev \cite{Matveev:2000}.

\begin{lemma}\label{lem:matveev}
  Denote by $\alpha_1, \dots, \alpha_N$ algebraic numbers, not $0$ nor $1$.  Let $K=\Q(\alpha_1,\ldots,\alpha_N)$ and $D=[K:\Q]$ and denote by $b_1, \dots, b_N$ rational integers with $b_N\neq 0$. Furthermore, let $\kappa=1$ if $K$ is real and $\kappa=2$ otherwise. For all integers $j$ with $1\leq j\leq N$ choose
 $$
    A_j\geq h_M(\alpha_j):=\max\{D h(\alpha_j), |\log\alpha_j|,0.16\},
  $$
  and set
  $$
    E=\max ( \{1\} \cup \{|b_j| A_j /A_N\: :\: 1\leq j \leq N \} ).
  $$
  Assume that $\Lambda = b_1 \log \alpha_1 + \cdots + b_N \log \alpha_N\neq 0$. Then
  \begin{equation*}
    \log |\Lambda|
    \geq -C(N,\kappa)D^2 \Omega\log(eD)\log(eE)
  \end{equation*}
  with $\Omega=A_1\cdots A_N$ and
  $$C(N,\kappa)= \min\left\{\frac 1{\kappa}\left(\frac {eN}2\right)^\kappa 30^{N+3} N^{3.5}, 2^{6N+20}\right\}.$$
  \end{lemma}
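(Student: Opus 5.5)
This lemma is not something we would prove from scratch. It is Matveev's lower bound for linear forms in logarithms, and the plan is to quote it from \cite{Matveev:2000}, checking that the normalisations agree with those used in this paper. A self-contained proof would require Baker's method in full: an auxiliary function built by Siegel's lemma, an extrapolation argument, and a zero estimate on commutative algebraic groups. That lies far outside the scope of the present paper. The work therefore consists in matching the statement above with Matveev's Corollary~2.3 and his Theorem~2.2.

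First, we align the height conventions. Matveev uses the modified height $A_j\ge\max\{D h(\alpha_j),|\log\alpha_j|,0.16\}$, with $D=[K:\Q]$ and $h$ the absolute logarithmic Weil height. This is the same $h$ as defined in Section~2, so no renormalisation is needed.

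Second, we align the quantity playing the role of $\log B$. Matveev states his bound with $B\ge\max\{1,\max_j |b_j|A_j/A_N\}$, where one may order the $\alpha_j$ so that $b_N\neq0$. This is exactly our $E$, so the factor $\log(eE)$ appears as stated.

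Third, we align the constant. Matveev's Corollary~2.3 gives
\[
\log|\Lambda|>-\tfrac{1}{\kappa}\left(\tfrac{eN}{2}\right)^{\kappa}30^{N+3}N^{3.5}D^2\Omega\log(eD)\log(eE).
\]
Here $\kappa=1$ if $K\subseteq\R$ and $\kappa=2$ otherwise. His cruder general form gives the constant $2^{6N+20}$ in place of that prefactor. Taking whichever bound is stronger yields $C(N,\kappa)$ as the minimum of the two expressions. The hypotheses $\alpha_j\neq 0,1$, $b_N\ne0$ and $\Lambda\neq0$ are exactly Matveev's, with any fixed branches of the logarithms.

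The only point needing care is to verify that the branch of $\log\alpha_j$ and the term $|\log\alpha_j|$ in $A_j$ are compatible. This is the main, though minor, obstacle: which version of Matveev's theorem is quoted determines whether $\kappa$ enters as stated. In the real case we take principal logarithms of positive reals; in the complex case any branch is allowed, and $\kappa=2$ absorbs this. With these identifications, the lemma follows directly from \cite{Matveev:2000}.
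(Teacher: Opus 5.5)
Your plan matches the paper's own treatment: the paper gives no proof of this lemma and simply quotes it from \cite{Matveev:2000}. Your matching of $A_j$, $E$, $\kappa$ and the constant $C(N,\kappa)$ is the bookkeeping that citation requires, and taking the minimum of two valid constants is again valid. One small point is that you need not restrict to positive $\alpha_j$ when $K$ is real, since the statement allows any fixed determination of $\log\alpha_j$. In that case the imaginary part of $\Lambda$ is an integer multiple of $\pi$, so either $|\Lambda|\geq\pi$ and the bound is trivial, or $\Lambda$ is a real linear form in the $\log|\alpha_j|$.
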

  
  Here $\log \alpha$ can be any nonzero determination of the logarithm of $\alpha$. However, we will choose the principal value, that is we will assume that the imaginary part of $\log \alpha$ is contained in the interval $(-\pi,\pi]$.

In the case of $p$-adic logarithms we use a result due to Yu \cite{Yu:2007}.

\begin{lemma}\label{lem:Yu}
 Denote by $\alpha_1, \dots, \alpha_N$ algebraic numbers, not $0$ nor $1$. Let $K=\Q(\alpha_1,\ldots,\alpha_N)$ and $D=[K:\Q]$ and denote by $b_1, \dots, b_N$ rational integers. Let $\mathfrak p$ be a prime ideal of $\ord_K$ with $\mathfrak p|p$ and $p$ a prime number. Moreover we denote by $e_{\mathfrak p}$ the ramification index of $\mathfrak p$ and by $f_{\mathfrak p}$ the residue class degree of $\mathfrak p$. Assume that
$$\Lambda=\alpha_1^{b_1}\cdots \alpha_N^{b_N}-1$$
is nonzero. Then we have
\begin{equation}\label{eq:Yu}
\order_{\mathfrak p}(\Lambda)\leq C(N,D) H_1\cdots H_N \log B
\end{equation}
where
$$C(N,D)=(16eD)^{2(N+1)}N^{5/2}\log(2ND)\log(2D)e_{\mathfrak p}^N\frac{p^{f_{\mathfrak p}}}{(f_{\mathfrak p}\log p)^2}$$
and $B\geq 3$ is a number such that $|b_i|\leq B$ for $1\leq i\leq N$ and
$$ H_i\geq h_Y(\alpha_j):=\max\left\{h(\alpha_i),\frac{1}{16e^2d^2}\right\} \quad\quad \hbox{for $i=1,\dots,N$}.$$
\end{lemma}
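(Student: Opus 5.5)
The statement is a specialisation of the main theorem of Yu \cite{Yu:2007} on $p$-adic linear forms in logarithms. I would not reprove the transcendence estimate itself. The plan is to quote Yu's general bound and check that his hypotheses and constants reduce to the form \eqref{eq:Yu}.

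\emph{Step 1: recall Yu's theorem.} Yu's theorem is stated for $\Lambda=\alpha_1^{b_1}\cdots\alpha_N^{b_N}-1\neq 0$ in a number field $K$ of degree $D$, at a prime $\mathfrak p\mid p$ with ramification index $e_{\mathfrak p}$ and residue degree $f_{\mathfrak p}$. It has the shape
$$\order_{\mathfrak p}(\Lambda)< c_1(N,D,\mathfrak p)\, h'(\alpha_1)\cdots h'(\alpha_N)\log B,$$
where $h'(\alpha)=\max\{h(\alpha),\frac{f_{\mathfrak p}\log p}{16 e^2 D^2}\}$ (or a comparable normalisation) and $B\ge\max|b_i|$, $B\geq 3$. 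The constant is roughly $(16eD)^{2(N+1)}N^{5/2}\log(2ND)\log(2D)e_{\mathfrak p}^N p^{f_{\mathfrak p}}/(f_{\mathfrak p}\log p)^{2}$. I would use the version without extra Kummer-type conditions. That version only costs a factor like $e_{\mathfrak p}^N$ and the term $p^{f_{\mathfrak p}}$, both of which already appear in our $C(N,D)$.

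\emph{Step 2: adjust the heights.} Our $H_i$ is defined with the smaller threshold $1/(16e^2d^2)$ instead of Yu's $\frac{f_{\mathfrak p}\log p}{16e^2D^2}$. Since $f_{\mathfrak p}\log p\ge\log 2$, Yu's modified height $h'(\alpha_i)$ is at most $f_{\mathfrak p}\log p\cdot H_i$ whenever $H_i\ge h_Y(\alpha_i)$. This multiplies the product by at most $(f_{\mathfrak p}\log p)^N$. I expect this factor to be absorbed by the way Yu distributes the powers of $f_{\mathfrak p}\log p$ in his constant: he writes the denominators as $(f_{\mathfrak p}\log p)^{N+2}$ and puts $(f_{\mathfrak p}\log p)^N$ into the heights. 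Once the heights are rescaled, what remains is exactly $(f_{\mathfrak p}\log p)^{-2}$.

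\emph{Step 3: the main obstacle.} The hard part is purely bookkeeping: matching Yu's precise normalisations with ours. These are his choice of $D$ (he sometimes uses $[\Q(\alpha_1,\dots,\alpha_N):\Q]$), the role of the parameter $B$, and the case distinction $p$ odd versus $p=2$. I would first try to take his Corollary with the cruder but uniform constant, which avoids the distinctions on $p$. I would then check that each numerical factor is dominated by the corresponding factor in $C(N,D)$ above. If a harmless extra absolute constant appears, I would absorb it into $(16eD)^{2(N+1)}$, which has ample room.
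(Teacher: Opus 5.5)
Your strategy matches the paper's: the paper gives no argument for this lemma and simply quotes the bound from Yu's 2007 paper. A proof by citation therefore depends entirely on quoting the exact statement. Your Steps 1 and 3 instead work from a guessed normalisation (``or a comparable normalisation'', ``I expect''). The one concrete derivation you make, Step 2, is not correct as written.

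From $H_i\ge\max\{h(\alpha_i),\frac{1}{16e^2D^2}\}$ you only get $\max\{h(\alpha_i),\frac{f_{\mathfrak p}\log p}{16e^2D^2}\}\le\max\{1,f_{\mathfrak p}\log p\}\,H_i$. Your inequality with the factor $f_{\mathfrak p}\log p$ alone requires $f_{\mathfrak p}\log p\ge 1$, and $f_{\mathfrak p}\log p\ge\log 2$ does not give this. Take $p^{f_{\mathfrak p}}=2$ and $H_i=h(\alpha_i)$: rescaling from a bound with $(f_{\mathfrak p}\log p)^{-(N+2)}$ then leaves an extra factor $(\log 2)^{-N}$.

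That factor is not an absolute constant. It also cannot be ``absorbed into $(16eD)^{2(N+1)}$'', because that factor is part of the constant $C(N,D)$ you are asked to prove. There is room only if the cited constant is itself smaller, and you have not checked that.

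The fix is to quote Yu's bound in the form the paper uses, already stated with the threshold $1/(16e^2D^2)$ and the factor $p^{f_{\mathfrak p}}/(f_{\mathfrak p}\log p)^2$. Then no rescaling is needed and Step 2 disappears.
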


The following lemma will be useful in applying the results of Matveev and Yu stated above. We will denote by $\zeta\in K$ a fixed primitive root of unity and we write $\omega=|\langle \zeta \rangle|$ for the number of roots of unity in $K$. For the next results we will introduce the notion $h^*(\alpha)$ for $\alpha\in K^*$, where $h^*(\alpha)=h(\alpha)$ if $\alpha$ is not a root of unity and $h^*(\alpha)=1$ otherwise. The following lemma is an application of the lower bounds for linear forms of logarithms and similar results can be found in \cite[Section 4.2]{Evertse-Gyoery:UnitEq}.

\begin{lemma}\label{lem:Fu-Units-lin-form}
 Let $\eta_1,\dots,\eta_{s-1}$ be a fundamental system of $S$-units of $K$ that satisfies the statements in Lemma \ref{lem:Dirichlet} and let $\alpha\in K^*$ be fixed. Let $b_0,b_1,\dots,b_{s-1}$ be integers with $B:=\max\left\{|b_1|,\dots,|b_{s-1}|,\frac{es\omega}2\right\}$ and $(b_1,\dots,b_{s-1})\neq (0,\dots,0)$. Then we have
 either $\alpha\zeta^{b_0}\eta_1^{b_1}\cdots \eta_{s-1}^{b_{s-1}}=1$ or
 \begin{equation}\label{eq:Linformbound}
  -\log \|\alpha\zeta^{b_0}\eta_1^{b_1}\cdots \eta_{s-1}^{b_{s-1}}-1\|_v\leq C_v  h^*(\alpha)R_S\log B,
 \end{equation}
where 
$$
C_v=3.8\cdot 10^7 d^2\log(3d) (s+1)^{5.5}((s-1)!)^2 (54(\log(3d))^3)^{s}
$$
in case that $v\in S_\infty$ and
$$C_v=1.6\cdot 10^4 d^2 (s+1)^{2.5}(\log(2(s+1)d))^2 P^d((s-1)!)^2\left(128 e^2 d^2\right)^s
$$
in case that $v\in S\setminus S_\infty$.
\end{lemma}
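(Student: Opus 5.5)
The plan is to split into the archimedean case $v\in S_\infty$, handled by Matveev (Lemma \ref{lem:matveev}), and the non-archimedean case $v\in S\setminus S_\infty$, handled by Yu (Lemma \ref{lem:Yu}). Write $\Lambda_0:=\alpha\zeta^{b_0}\eta_1^{b_1}\cdots\eta_{s-1}^{b_{s-1}}$ and assume $\Lambda_0\neq 1$.

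\emph{Archimedean case.} Embed $K$ into $\C$ via the embedding defining $v$. We have $\|\Lambda_0-1\|_v=|\Lambda_0-1|^{d(v)}$ with $d(v)\le 2/d\le 1$, so it suffices to bound $-\log|\Lambda_0-1|$ from above by roughly $C_v$ times the quantity in \eqref{eq:Linformbound}. If $|\Lambda_0-1|\ge 1/2$ there is nothing to prove. Otherwise $|\log\Lambda_0|\le 2|\Lambda_0-1|$ for the principal logarithm. We then write
$$\log\Lambda_0=\log\alpha+b_0\log\zeta+\sum b_i\log\eta_i+b_s\cdot 2\pi i=:\Lambda,$$
where $\log\zeta$ is replaced by $\log(-1)$ or absorbed appropriately, and $|b_s|$ is controlled by $s B$ plus a bounded amount since $|\log\Lambda_0|$ is small. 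Reducing $b_0$ modulo $\omega$ gives $|b_0|\le\omega$, which is why $B$ contains the term $es\omega/2$.

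We apply Lemma \ref{lem:matveev} with $N=s+1$ numbers $\alpha,\eta_1,\dots,\eta_{s-1},-1$ (the root of unity merged into $\alpha$ by replacing $\alpha$ with $\alpha\zeta^{b_0}$, whose height is unchanged). Take $D=d$ and $A_j\le dh(\eta_j)+\pi$. Using $h(\eta_j)\ge\lambda(d)/d$, this gives $A_j\le (1+\pi/\lambda(d))\,d\,h(\eta_j)$, and then by Lemma \ref{lem:Dirichlet}(1),
$$\prod A_j\le (\text{const}\cdot(\log 3d)^3)^{s}\, d^{s-1}c_1R_S.$$
This is where the factors $((s-1)!)^2$ and $(54(\log 3d)^3)^s$ in $C_v$ come from, with $d^{s-1}/(2d)^{s-1}$ cancelling. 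For $\alpha$ we use $A\le \text{const}\cdot d\,h^*(\alpha)$. To control $E$, choose $A_N$ to be the entry for $\alpha$, or rather order the terms so that $E\le$ a constant times $B$. That yields $\log(eE)\ll\log B$, where the condition $B\ge es\omega/2$ absorbs the constant. Matveev's constant $C(s+1,2)\approx 30^{s+4}(s+1)^{5.5}$ then gives the stated form.

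\emph{Non-archimedean case.} Let $v$ correspond to $\mathfrak p\mid p$ with $p\le P$. Then $-\log\|\Lambda_0-1\|_v=\frac{e_{\mathfrak p}f_{\mathfrak p}}{d}\log p\cdot \order_{\mathfrak p}(\Lambda_0-1)$ when this is positive. Apply Lemma \ref{lem:Yu} with $N=s$ numbers $\alpha\zeta^{b_0},\eta_1,\dots,\eta_{s-1}$ (the first with exponent $1$), taking $H_i=h(\eta_i)$ (admissible, since $h(\eta_i)\ge\lambda(d)/d\ge 1/(16e^2d^2)$) and $H_0=h^*(\alpha)$. Again Lemma \ref{lem:Dirichlet}(1) gives $\prod H_i\le c_1R_S$. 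Bound $e_{\mathfrak p}^N p^{f_{\mathfrak p}}/(f_{\mathfrak p}\log p)^2\cdot e_{\mathfrak p}f_{\mathfrak p}\log p\le d^{s+1}P^d$, and collect $(16ed)^{2(s+1)}\,2((s-1)!)^2/(2d)^{s-1}$ into $(128e^2d^2)^s$-type factors.

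\emph{Main obstacle.} The main difficulty is the bookkeeping in the archimedean case. One must handle the $2\pi i$ term and the root of unity while keeping $E\ll B$. The choice of $A_N$ in Matveev's $E$ is delicate because $h^*(\alpha)$ may be large or small relative to the $h(\eta_j)$. I would take $A_N$ to be the minimal $A_j$ among the $\eta$'s, or simply bound $E\le B\max A_j/\min A_j$ and absorb this ratio using Lemma \ref{lem:Dirichlet}(2). If that loses too much, I would instead apply the trivial estimate when $\alpha$ dominates.
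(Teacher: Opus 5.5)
\textbf{Overall.} Your architecture is the paper's:
\begin{itemize}
\item Matveev at archimedean places, with $\Omega$ controlled through $h_M(\gamma)\le \mathrm{const}\cdot d\,h(\gamma)/\lambda(d)$ and Lemma~\ref{lem:Dirichlet}(1);
\item Yu at finite places, with $h_Y(\eta_i)=h(\eta_i)$ by Voutier's bound.
\end{itemize}
Merging $\zeta^{b_0}$ into $\alpha$ and writing the $2\pi i$-correction as a multiple of $\log(-1)$ is a harmless variant of the paper's $\tilde b\log\zeta$. At finite places it even saves one logarithm in Yu's bound.

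\textbf{The gap.} It sits exactly at the step you single out as the main obstacle, the control of $E$. Your remark ``order the terms so that $E\le$ a constant times $B$'' points the right way, but the concrete remedies you then propose do not achieve it.
\begin{itemize}
\item Taking $A_N$ to be the \emph{minimal} $A_j$ makes every ratio $A_j/A_N$ as large as possible.
\item Bounding $E\le B\max_jA_j/\min_jA_j$ and invoking Lemma~\ref{lem:Dirichlet}(2) only gives $\log(eE)\le \log B+\log\max\{1,h^*(\alpha),c_2R_S\}+O(\log d)$. The term $\log(c_2R_S)$ does not depend on the exponents $b_i$. Since $C_v$ may depend only on $d,s,P$, while $R_S$ is unbounded for fixed $d,s,P$ (already for real quadratic fields with $S=S_\infty$) and $B$ may stay bounded, you would end with $h^*(\alpha)R_S(\log B+\log R_S)$ instead of \eqref{eq:Linformbound}.
\item The Liouville-type ``trivial estimate'' does not rescue this. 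It is linear in $B R_S$, and it only addresses a large $h^*(\alpha)$, not a large ratio such as $h(\eta_i)/h(\eta_j)$ or $h(\eta_i)/\pi$.
\end{itemize}

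\textbf{The fix.} The missing observation is elementary: Lemma~\ref{lem:matveev} only requires $b_N\neq 0$, and the order of the $\alpha_j$ is at your disposal. Discard the terms with zero coefficient and put last the remaining term with the \emph{largest} $A_j$. Then $|b_j|A_j/A_N\le |b_j|$, so $E\le \max\{1,\max_j|b_j|\}$. In your normalization the coefficient of $\log(-1)$ has absolute value at most $(s-1)B+2\le sB$; this follows by comparing imaginary parts of principal logarithms. Hence $E\le sB$, and $B\ge es\omega/2\ge es$ gives $\log(eE)\le 2\log B$. This is what the paper uses implicitly when it writes $\log(eE)\le\log(e\max\{|\tilde b|,|b_1|,\dots,|b_{s-1}|\})$. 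With this fix the rest of your plan goes through.

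\textbf{Two bookkeeping remarks.}
\begin{itemize}
\item To get $d^2$ rather than $d^3$ in the archimedean $C_v$, you must actually use $\|\cdot\|_v=|\cdot|^{d(v)}$ with $d(v)\le 2/d$, as the paper does with its factor $\kappa/d$. The weaker $d(v)\le 1$ is not enough.
\item At a finite place the exact conversion is $-\log\|\cdot\|_v=\frac{f_{\mathfrak p}}{d}\log p\cdot\order_{\mathfrak p}(\cdot)$. Your factor $e_{\mathfrak p}f_{\mathfrak p}/d$ therefore overestimates it, which is harmless for an upper bound.
\end{itemize}
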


\begin{proof}
We assume that $\alpha\zeta^{b_0}\eta_1^{b_1}\cdots \eta_{s-1}^{b_{s-1}}\neq 1$ and start with the case that $v$ is archimedian. First, we show that if $K\neq \Q$ or $K=\Q$ and $\gamma>0$ we have
 \begin{equation}\label{eq:mod-height-Matv-ieq}
  h_M(\gamma)\leq h(\gamma)\left(\frac{\pi d}{\lambda(d)}+\frac 1d\right)\leq \frac{3.2 d}{\lambda(d)} h(\gamma).
 \end{equation}
 for $\gamma\in K^*$ not a root of unity.
 
 We note that 
 $$\frac{d h(\gamma)}{\lambda(d)}\cdot0.16\geq 0.16.$$
 Therefore \eqref{eq:mod-height-Matv-ieq} holds in case that $h_M(\gamma)=0.16$. We also note that 
 $$
 dh(\gamma)\geq h(\gamma)\left(\frac{\pi d}{\lambda(d)}+\frac 1d\right).
 $$
 Thus we may assume that $h_M(\gamma)=|\log \gamma|$ and we obtain
 \begin{multline*}
  |\log \gamma|\leq \log|\gamma|+\pi\leq \frac{h(\gamma)}d+\pi\leq h(\gamma)\left(\frac{\pi}{h(\gamma)}+\frac 1d\right)\leq h(\gamma)\left(\frac{\pi d}{\lambda(d)}+\frac 1d\right). 
 \end{multline*}
This proves the first inequality of \eqref{eq:mod-height-Matv-ieq}. For the second inequality note that $\lambda(d)=\frac{8}{9(\log 3d)^3}$. That is we have
$$
\frac{\pi d}{\lambda(d)}+\frac 1d\leq \frac{d}{\lambda(d)}\left(\pi +\frac{8}{9d^2 (\log 3d)^3}\right)<\frac{3.2 d}{\lambda(d)}.
$$
if $d\geq 2$. In the case that $K=\Q$ and $\gamma>0$ we have $h_M(\gamma)=dh(\alpha)=h(\alpha)$ and Inequality \eqref{eq:mod-height-Matv-ieq} holds trivially. 

Let us write $\Gamma=\alpha\zeta^{b_0}\eta_1^{b_1}\cdots \eta_{s-1}^{b_{s-1}}-1$ and $\Lambda=\log (\Gamma+1)$. If $|\Gamma|\geq 1/2$ the lemma obviously holds. In case that $|\Gamma|< 1/2$ we deduce that $|\Lambda|<2 |\Gamma|$.
Note that 
\begin{align*}
\Lambda&=\log \alpha+b_0\log \zeta+b_1\log \eta_1+\dots+b_{s-1}\log\eta_{s-1}-k i\pi\\
&=\log \alpha+\tilde b\log \zeta+b_1\log \eta_1+\dots+b_{s-1}\log\eta_{s-1},
\end{align*}
for suitable integers $k$ and $\tilde b=\frac{\omega}2 k+b_0$ such that the imaginary part of $\Lambda$ is contained in the interval $(-\pi,\pi]$. That is,
$$|\tilde b|\leq \frac{B(s-1)\omega+\omega}2+\frac{\omega}2\leq \frac{Bs\omega}2.$$

We apply Matveev's lower bound to $|\Lambda|$. Note that $h_M(\zeta)=\max\{\frac{2\pi}{\omega},0.16\}\leq \pi$, hence due to Lemma \ref{lem:Dirichlet} and Inequality \eqref{eq:mod-height-Matv-ieq} we have
\begin{align*}
\Omega&\leq h^*(\alpha)\pi \left(\frac{3.2 d}{\lambda(d)}\right)^s
c_1 R_S\\
&= h^*(\alpha)\pi \left(\frac{3.2 d}{\lambda(d)}\right)^s \frac{2\left((s-1)!\right)^2}{(2d)^{s-1}} R_S \\
&= 4d\pi((s-1)!)^2 \left(\frac{1.6}{\lambda(d)}\right)^{s} h^*(\alpha)R_S.
\end{align*}

Since we assume that $B\geq \frac{es\omega}2$ we have
$$e\max\{|\tilde b|,|b_1|,\dots,|b_{s-1}|\}\leq \frac {e^2s\omega}2B\leq B^2.$$
This yields
$$\log(eE)\leq \log \left(e\max\{|\tilde b|,|b_1|,\dots,|b_{s-1}|\}\right)\leq 2 \log B.$$
Therefore we obtain
\begin{multline*}
-\log |\Lambda| \leq \frac 1{\kappa}\left(\frac{e(s+1)}2\right)^\kappa 30^{(s+1)+3}(s+1)^{3.5}d^2 \\
\times 4d\pi((s-1)!)^2 \left(\frac{1.6}{\lambda(d)}\right)^{s} h^*(A)R_S \log (ed) 2\log B
\end{multline*}
Since $|\Lambda|^{\kappa/d}=\|\Lambda\|_v$ we obtain that
$$-\log (\|\Gamma-1\|_v)\leq- \log (2\|\Lambda\|_v)\leq -\log 2-\frac{\kappa}d \log|\Lambda|$$
and therefore we have
\begin{align*}
-\log (\|\Gamma-1\|_v)& \leq \frac{\kappa}d \cdot \frac 1{\kappa}\left(\frac{e(s+1)}2\right)^\kappa 30^{(s+1)+3}(s+1)^{3.5}d^2 \\
&\qquad \times 4d\pi((s-1)!)^2 \left(\frac{1.6}{\lambda(d)}\right)^{s} h^*(\alpha)R_S\log (ed) 2\log B+\log 2\\
&\leq 2e^2\pi 30^4 d^2\log(3d) (s+1)^{5.5}((s-1)!)^2 \left(\frac{48}{\lambda(d)}\right)^{s}\\
&\qquad\times h^*(\alpha)R_S\log B+\log 2\\
&\leq 3.8\cdot 10^7 d^2\log(3d) (s+1)^{5.5}((s-1)!)^2 (54d(\log(3d))^3)^{s}\\
&\qquad \times h^*(\alpha)R_S\log B 
\end{align*}
which proves the lemma for $v\in S_\infty$ in case that $K\neq \Q$. Note that in the case that $K=\Q$ we may choose $\eta_1,\dots,\eta_s$ to be positive and without loss of generality we can also choose $\zeta=1$ by changing the sign of $\alpha$ if necessary. In case that $\alpha>0$ the same arguments as above would lead to an even sharper bound for $-\log \|\Gamma -1\|_v$. In case that $\alpha<0$ we have $\Gamma<0$ and obtain $-\log\|\Gamma-1\|_v<0$ which also proves the lemma in this case. 

We consider the case that $v\in S$ is finite. In particular, we assume that $v$ corresponds to the prime $\mathfrak p|p$. Let us note that due to Voutier's lower bound \eqref{eq:Voutier-est} for heights we have $h_Y(\gamma)=h(\gamma)$ provided that $\gamma$ is not a root of unity. We also note that $h_Y(\zeta)\leq 1$.
 
 Due to Lemma \ref{lem:Dirichlet} we have
 $$
 h_Y(\alpha)h_Y(\zeta)h_Y(\eta_1)\cdots h_Y(\eta_{s-1})\leq  c_1 R_S h^*(\alpha)
 =\frac{2((s-1)!)^2}{(2d)^{s-1}} R_Sh^*(\alpha).
$$
 Hence, we get by Yu's lower bound (Lemma \ref{lem:Yu}), with $N=s+1$ and $D=d$ the inequality
 \begin{multline*}
 \order_{\mathfrak p}(\Gamma-1)\leq (16ed)^{2s+2}(s+1)^{2.5}\log(2(s+1)d)\log(2d)e_{\mathfrak p}^{s+1}\frac{p^{f_{\mathfrak p}}}{(f_{\mathfrak p}\log p)^2}\\
 \times \frac{2((s-1)!)^2}{(2d)^{s-1}}  R_Sh^*(\alpha)\log B.
 \end{multline*}
 Since we have $p\leq P$ and $e_{\mathfrak p},f_{\mathfrak p}\leq d$, we obtain
 \begin{multline*}
 \order_{\mathfrak p}(\Gamma-1)\leq 1024 e^2 d^2 (s+1)^{2.5}(\log(2(s+1)d))^2 \frac{p^d}{(\log p)^2}((s-1)!)^2\\
 \times \left(128 e^2 d^2\right)^s R_Sh^*(\alpha)\log B.
 \end{multline*}
 Finally, let us note that $\order_\mathfrak p(\alpha)=-\frac{\log\|\alpha\|_v}{\log p}$. We also note that $\frac{p^d}{\log p}\leq \frac{P^d}{\log 2}$, which yields the desired inequality.
 \end{proof}

 For a unified bound which holds for all places $v\in S$ we note that 
 \begin{equation}\label{eq:Cv-bound}
  C_v\leq 3.8\cdot 10^7 d^2 (s+1)^{5.5}(\log(2(s+1)d))^2 P^d((s-1)!)^2\left(128e^2 d^2\right)^s
 \end{equation}
 holds for all $v\in S$.
 
 Next, we prove the following useful lemma that relates the height of the $S$-unit $x$ to the maximal exponent that appears, if $x$ is written as a product of fundamental units.

 \begin{lemma}\label{lem:height-exp}
  Let $x\in U_S$ and let $\eta_1,\dots,\eta_{s-1}$ be a fundamental system of units satisfying the statements in Lemma \ref{lem:Dirichlet} and $\zeta$ be a primitive root of unity of $K$. Assume that $x=\zeta^{b_0}\eta_1^{b_1}\cdots \eta_{s-1}^{b_{s-1}}$ and write $B=\max\{b_1,b_2,\dots,b_{s-1}\}$. Then we have
  $$C_1B<h(x)<C_2BR_S,$$
  with
  $$
  C_1=\frac{2^s}{9d((s-1)!)^2(\log(3d))^3}
  $$
  and
  $$
  C_2= \frac{(s-1)((s-1)!)^2 }d \left(\frac{9(\log (3d))^3}{16} \right)^{s-2}.
  $$
 \end{lemma}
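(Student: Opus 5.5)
We bound $h(x)$ in both directions through the logarithmic vector $\ell(x)=(\log\|x\|_{v_1},\dots,\log\|x\|_{v_s})$. Since $\zeta$ contributes nothing, we have $\ell(x)=\sum_{i=1}^{s-1}b_i\,\ell(\eta_i)$. By the product formula the coordinates of $\ell(x)$ sum to zero, so
$$h(x)=\sum_{v\in S}\max\{0,\log\|x\|_v\}=\tfrac12\sum_{v\in S}|\log\|x\|_v|.$$
We also read $B$ in the statement as $\max_i|b_i|$.

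\emph{Upper bound.} By the triangle inequality and the analogous identity for each $\eta_i$,
$$h(x)\le \tfrac12\sum_i|b_i|\sum_v|\log\|\eta_i\|_v|=\sum_i|b_i|\,h(\eta_i)\le B\sum_i h(\eta_i).$$
Statement (2) of Lemma~\ref{lem:Dirichlet} gives $h(\eta_i)\le c_2R_S$. Hence $h(x)\le (s-1)c_2 BR_S$.

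It remains to check that $(s-1)c_2$ equals, or is at most, $C_2$. We have
$$(s-1)c_2=(s-1)\frac{2((s-1)!)^2}{(2d)^{s-1}}\Bigl(\frac{d}{\lambda(d)}\Bigr)^{s-2},\qquad \lambda(d)=\frac{8}{9(\log 3d)^3}.$$
This simplifies to $\frac{(s-1)((s-1)!)^2}{d}\bigl(\frac{9(\log 3d)^3}{16}\bigr)^{s-2}$, which is exactly $C_2$. Strictness of the inequality is harmless, or can be obtained by a slightly sharper treatment.

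\emph{Lower bound.} Choose a set $S'$ of $s-1$ places whose submatrix $R'$ of $R$ has inverse $E=(e_{ij})$ satisfying $|e_{ij}|\le c_3$, as in statement (3). Restricting to $S'$ gives $(\log\|x\|_v)_{v\in S'}=R'^{T}b$, so $b=E^{T}(\log\|x\|_v)_{v\in S'}$. Therefore
$$B\le c_3\sum_{v\in S'}|\log\|x\|_v|\le 2c_3\,h(x),$$
that is, $h(x)\ge B/(2c_3)$.

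Now compute $1/(2c_3)$. We have
$$2c_3=\frac{2c_1d^{s-1}}{\lambda(d)}=\frac{4((s-1)!)^2}{2^{s-1}\lambda(d)},$$
so
$$\frac{1}{2c_3}=\frac{2^{s-1}\cdot 8}{4\cdot 9((s-1)!)^2(\log 3d)^3}=\frac{2^s}{9((s-1)!)^2(\log 3d)^3}.$$
This is $d$ times $C_1$, so the bound obtained is even stronger than needed, and $C_1B\le h(x)$ follows.

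\emph{Main obstacle.} The delicate point is the normalization in statement (3). The entries $e_{ij}$ must refer to the matrix in the same normalized absolute values $\|\cdot\|_v$ that define $h$, and one must be careful about which indices the minor uses. An extra factor such as $d$ can enter here, which is presumably why $C_1$ carries a $1/d$. If the author's normalization of $R$ differs, I would absorb this by the trivial comparison between the two normalizations, at the cost of at most a factor $d$. Strictness is again not essential.
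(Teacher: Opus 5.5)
This is essentially the paper's proof. For the upper bound you use subadditivity, $h(x)\le\sum_i|b_i|h(\eta_i)\le(s-1)c_2BR_S$, and your check that $(s-1)c_2=C_2$ is correct. For the lower bound you invert an $(s-1)\times(s-1)$ minor of the regulator matrix and use $|e_{ij}|\le c_3$. Your transposed orientation $b=E^{T}(\log\|x\|_v)_{v\in S'}$ is in fact cleaner than the paper's.

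The one real slip is the identity $h(x)=\sum_{v\in S}\max\{0,\log\|x\|_v\}$. The paper defines $h(\alpha)=\frac1{[K:\Q]}\sum_v\max\{0,\log\|\alpha\|_v\}$. With the product formula this gives $\sum_{v\in S}|\log\|x\|_v|=2d\,h(x)$, which is exactly what the paper uses.

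In the upper bound this factor cancels, so nothing changes there. In the lower bound it gives $B\le 2dc_3\,h(x)$, i.e. precisely $h(x)\ge B/(2dc_3)=C_1B$. So your claimed gain of a factor $d$ is spurious. The $1/d$ in $C_1$ comes from the height normalization, not from the normalization of $R$ in statement (3) as your last paragraph guesses. The stated bound still follows once this is corrected.

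Also, strictness cannot be recovered by a sharper treatment in general. For $B=0$, i.e. $x$ a root of unity, every term in $C_1B<h(x)<C_2BR_S$ is $0$, so both strict inequalities fail. The paper's argument likewise yields only the non-strict bounds, and those are all that is used later.
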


\begin{proof}
 First, we remind that the height satisfies
 \begin{itemize}
  \item $h(\alpha\beta)\leq h(\alpha)+h(\beta)$ and
  \item $h(\alpha^r)=|r|h(\alpha)$
 \end{itemize}
 for any nonzero algebraic numbers $\alpha$ and $\beta$ and rational $r$. By Lemma \ref{lem:Dirichlet} we have
 \begin{align*}
 h(x)&\leq \sum_{i=1}^{s-1}b_ih(\eta_i)\leq (s-1)B c_2 R_S\\
 &=(s-1) c_1\left(\frac{d}{\lambda(d)}\right)^{2-s} BR_S\\
 &\leq (s-1) \frac{2((s-1)!)^2}{(2d)^{s-1}}\left(\frac{9d(\log(3d))^3}{8}\right)^{s-2}BR_S\\
&\leq \frac{(s-1)((s-1)!)^2 }d \left(\frac{9(\log (3d))^3}{16} \right)^{s-2} BR_S
 \end{align*}
 which proves the upper bound for $h(x)$.

 On the other hand let $\lambda(x)=(\log\|x\|_{v_j})_{j=1,\dots,s}$, with $\{v_1,\dots,v_s\}=S$ and consider the $(s-1)\times s$-matrix $R=(\log\|\eta_i\|_{v_j})$. Then we have
 $$R\cdot \left(\begin{array}{c} b_1\\b_2\\ \vdots \\b_{s-1}\end{array}\right)= \lambda(x).$$
 Let $E$ be the inverse of $(\log\|\eta_i\|_{v_j})_{\substack{1\leq i \leq s-1\\1\leq j\leq s-1}}$, then we get
 $$\left(\begin{array}{c} b_1\\b_2\\ \vdots \\b_{s-1}\end{array}\right)=E \left(\begin{array}{c}\log\|x\|_{v_1}\\ \log\|x\|_{v_2}\\ \vdots \\ \log\|x\|_{v_{s-1}} \end{array}\right).$$
 Note that
 $$\sum_{j=1}^{s-1} |\log\|x\|_{v_j}|\leq 2dh(x).$$
 Therefore we obtain by Lemma \ref{lem:Dirichlet}
 \begin{align*}
  B&\leq c_32dh(x)\\
  &=\frac{c_1d^{s-1}}{\lambda(d)}dh(x)\\
  &= \frac{2((s-1)!)^2}{(2d)^{s-1}} d^{s-1} d \frac{9(\log(3d))^3}8 h(x)\\
  &=\frac{9d((s-1)!)^2(\log(3d))^3}{2^s}h(x)
  \end{align*}
 and we obtain a lower bound for $h(x)$.
\end{proof}

The following lemma will be the essential tool in proving Theorem \ref{th:number}.

\begin{lemma}\label{lem:4terms}
 Given $a_1,a_2\in K^*$, and assume that for some $x_1,y_1,x_2,y_2\in U_S$ we have
 $$a_1x_1+a_2y_1-a_1x_2-a_2y_2=0$$
 such that no subsum vanishes. For fixed $v\in S$ let us write
 $$x=\max\{\|x_1\|_v,\|x_2\|_v,\|y_1\|_v,\|y_2\|_v\}$$
 and 
 $$y={\min}^{(2)}\{\|x_1\|_v,\|x_2\|_v,\|y_1\|_v,\|y_2\|_v\},$$
 where $\min^{(2)}\{ M\}$ denotes the second smallest element of the finite set $M\subseteq \R$. Let us write
 $$H=\max\{h(x_1),h(x_2),h(y_1),h(y_2)\} \quad\text{and}\quad A=\max\{h^*(a_1),h^*(a_2)\}.$$
 Assume that $H>3/C_1$, then we have
 $$\log y>\log x-C_3(v)AR_S\log H, $$
 where $C_3(v)=3C_v.$
 \end{lemma}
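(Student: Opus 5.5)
The plan is to work with the four terms of the vanishing sum instead of with the unknowns. Put
$$T_1=a_1x_1,\quad T_2=a_2y_1,\quad T_3=-a_1x_2,\quad T_4=-a_2y_2,$$
so that $T_1+T_2+T_3+T_4=0$ and no subsum vanishes. For the fixed $v\in S$ relabel them so that $\|T_{(1)}\|_v\geq \|T_{(2)}\|_v\geq\|T_{(3)}\|_v\geq\|T_{(4)}\|_v$. Since $\|\cdot\|_v=|\cdot|_v^{d(v)}$ and $d(v)\le 1$, we have $|\log\|a_i\|_v|\leq h(a_i)\leq A$ (using $h$ of $a_i$ and of $a_i^{-1}$). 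Hence $\|T_j\|_v$ and the $v$-value of the corresponding unknown differ by a factor between $e^{-A}$ and $e^{A}$. It therefore suffices to prove
$$\log\|T_{(3)}\|_v\geq \log\|T_{(1)}\|_v-(3C_v-c)AR_S\log H$$
with a small margin $c$ that absorbs the $O(A)$ losses. These $O(A)$ losses are harmless because $C_vR_S\log H$ is large: by \eqref{eq:Reg-bound} the factor $((s-1)!)^2$ in $C_v$ compensates $R_S^{-1}$, and $\log H\geq \log(3/C_1)$ is bounded below.

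\textbf{Step 1 (the two largest terms).} Write $T_{(1)}+T_{(2)}=-(T_{(3)}+T_{(4)})$. By the triangle inequality (the ultrametric one if $v$ is finite),
$$\|T_{(1)}+T_{(2)}\|_v\leq 2^{d(v)}\|T_{(3)}\|_v\leq 2\|T_{(3)}\|_v .$$
Also
$$\|T_{(1)}+T_{(2)}\|_v=\|T_{(2)}\|_v\cdot\|\beta+1\|_v,\qquad \beta=T_{(1)}/T_{(2)} .$$
Because no subsum vanishes, $\beta\neq -1$. Similarly, from $T_{(1)}=-(T_{(2)}+T_{(3)}+T_{(4)})$ we get $\|T_{(2)}\|_v\geq 3^{-1}\|T_{(1)}\|_v$. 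So the whole task reduces to a lower bound for $\|\beta+1\|_v=\|(-\beta)-1\|_v$.

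\textbf{Step 2 (linear form).} The quotient of any two of the $T_j$ has the form $\alpha u$, where $u\in U_S$ is a quotient of two of $x_1,x_2,y_1,y_2$ and $\alpha\in\{\pm1,\pm a_1/a_2,\pm a_2/a_1\}$. In particular $h^*(\alpha)\leq 2A$, since $h^*(\pm1)=1\le A$. Write
$$-u=\zeta^{b_0}\eta_1^{b_1}\cdots\eta_{s-1}^{b_{s-1}}$$
with the fundamental system of Lemma \ref{lem:Dirichlet}.

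If $(b_1,\dots,b_{s-1})\neq 0$, apply Lemma \ref{lem:Fu-Units-lin-form} to get
$$-\log\|\beta+1\|_v\leq C_v\cdot 2A R_S\log B .$$
By Lemma \ref{lem:height-exp},
$$\max|b_i|<h(u)/C_1\leq 2H/C_1 .$$
Hence $B\leq\max\{2H/C_1,\, es\omega/2\}$. Since $H>3/C_1$, I expect $\log B\leq \tfrac32\log H$ to be checkable from the explicit size of $C_1$, possibly after enlarging constants slightly. This produces a bound of the shape $3C_vAR_S\log H$ for $-\log\|\beta+1\|_v$.

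If all $b_i=0$, then $-\beta=\alpha\zeta^{b_0}\neq 1$ has height at most $2A$. Liouville's inequality gives
$$-\log\|\beta+1\|_v\leq h(\beta+1)\leq 2A+\log 2,$$
which is far smaller than the bound above.

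\textbf{Step 3 (combine).} From Steps 1 and 2,
$$\log\|T_{(3)}\|_v\geq \log\|T_{(1)}\|_v-\log 6-3C_vAR_S\log H+(\text{slack}).$$
Converting back to $\max$ and ${\min}^{(2)}$ of the $\|x_i\|_v,\|y_i\|_v$ costs at most $2A$. One caveat: the second smallest $x$-value need not correspond to $T_{(3)}$, but ${\min}^{(2)}$ of the $x$-values is at least $e^{-A}\cdot{\min}^{(2)}$ of the $T$-values, and the latter is $\|T_{(3)}\|_v$. The same holds for the maxima, so only $O(A)$ is lost.

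The main obstacle is the constant bookkeeping. The first point is bounding $\log B$ by a multiple of $\log H$ using only $H>3/C_1$, including the case where $es\omega/2$ dominates. The second is checking that the additive losses ($\log 6$, $2A$, Liouville) fit inside the gap between the bound $2C_v\cdot\tfrac32=3C_v$ and the stated $C_3(v)=3C_v$. If that gap is too tight, I would first try to sharpen the $h^*(\alpha)\le 2A$ step: for the quotients with $\alpha=\pm1$ one has $h^*(\alpha)=1\le A$. Alternatively, I would use the explicit lower bounds on $R_S$ from \eqref{eq:Reg-bound} and on $\log H$ to show the additive terms are dominated by $C_vAR_S\log H\cdot 10^{-3}$.
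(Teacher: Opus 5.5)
Your route is the paper's: order the four quantities by $\|\cdot\|_v$, isolate the two largest, apply Lemma \ref{lem:Fu-Units-lin-form} with $h^*(\alpha)\le 2A$ and $B<2H/C_1$ from Lemma \ref{lem:height-exp}, then absorb the additive losses. Ordering the terms $T_j$ rather than the unknowns costs only $O(dA)$. Your Liouville treatment of $(b_1,\dots,b_{s-1})=0$ also covers a case the paper passes over.

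\textbf{The gap.} The step you postpone genuinely fails: $\log B\le\frac32\log H$ does not follow from $H>3/C_1$. From $B<2H/C_1$ you need $\log(2H/C_1)\le\frac32\log H$, i.e.\ $H\ge 4/C_1^2$. Since $C_1<1$ for all $d,s$, the range $3/C_1<H<4/C_1^2$ is not covered. For example, for $d=s=2$ one has $C_1\approx 0.039$, so $3/C_1\approx 78$ but $4/C_1^2\approx 2.7\cdot 10^3$. Near $H=3/C_1$ your upper bound $2C_vAR_S\log(2H/C_1)$ exceeds $3C_vAR_S\log H$ by about $C_vAR_S\log(4/(3C_1))$. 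Your two fallbacks shrink the additive terms, not this multiplicative excess. Even at $H=4/C_1^2$ there is zero room left for $\log 6+O(dA)$. So with $C_3(v)=3C_v$ and only $H>3/C_1$, the argument does not close.

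\textbf{The paper has the same slip, and how to repair it.} In passing to condition \eqref{eq:H-bound-I}, the paper drops the factor $2$ in front of $\log(2/C_1)$. The correct requirement is $\log H>2\log(2/C_1)+(2dA+\log 2)/(AR_SC_v)$, and the dropped factor is why $H>3/C_1$ appears to suffice there.

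What your ingredients do prove under $H>3/C_1$ is the lemma with $C_3(v)=4C_v$. Indeed $2/C_1<\frac23 H$ gives $\log(2H/C_1)<2\log H-\log\frac32$. So your main term is at most $4C_vAR_S\log H-0.8\,C_vAR_S$. By \eqref{eq:Reg-bound}, this slack of $0.8\,C_vAR_S$ absorbs $\log 6+2dA$, and the branch $B=es\omega/2$ is handled in the same way. Alternatively, keep $3C_v$ and assume $H\ge 5/C_1^2$. Either change only alters the numerical constant $C$ in Theorem \ref{th:number}.

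\textbf{A smaller point.} The bound $h^*(\pm 1)\le 2A$ is not implied by $A=\max\{h^*(a_1),h^*(a_2)\}$: take $a_1=a_2$ a unit of height below $\frac12$. You should therefore work with $\max\{1,A\}$, as Theorem \ref{th:number} does.
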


 \begin{proof}
  Let us assume that $\sigma$ is a permutation on $\{1,2,3,4\}$ such that $x_1=u_{\sigma(1)}$, $x_2=u_{\sigma(2)}$, $y_1=u_{\sigma(3)}$ and $y_2=u_{\sigma(4)}$ and
  $$\|u_1\|_\nu\geq \|u_2\|_\nu\geq \|u_3\|_\nu\geq \|u_4\|_\nu.$$
  That is, we have the equation
  $$u_1\tilde a_1+u_2\tilde a_2+u_3\tilde a_3+u_4\tilde a_4=0$$
  with $\tilde a_i\in \{\pm a_1,\pm a_2\}$ and get the
  inequality
\begin{equation}\label{eq:fu-lem-ieq1}
\begin{split}
 -\log \left\|\frac{\tilde a_2 u_2}{\tilde a_1u_1}-1\right\|_v&\geq -\log \left\|\frac{\tilde a_3u_3+\tilde a_4 u_4}{\tilde a_1 u_1}\right\|_v\\
 &\geq -\log \frac{\|\tilde a_3\|_v\|\|u_3\|_v+\|\tilde a_4\|_v \|u_4\|_v}{\|\tilde a_1\|_v \|u_1\|_v}\\
 &\geq -\log\|u_3\|_v+\log\|u_1\|_v -\log\left(\frac{\|\tilde a_3\|_v+\|\tilde a_4\|_v}{\|\tilde a_1\|}\right)\\
 &\geq \log\|u_1\|_v-\log\|u_3\|_v-2dA-\log 2.
 \end{split}
\end{equation}
Indeed, we have
\begin{multline*}
\log\left(\frac{\|\tilde a_3\|_v+\|\tilde a_4\|_v}{\|\tilde a_1\|}\right)\\
\leq \log 2+ \max\{\log\|a_1\|_v,\log\|a_2\|_v\}-\min\{\log\|a_1\|_v,\log\|a_2\|_v\}\\
\leq \log 2 +2dA.
\end{multline*}

Next, we apply Lemma \ref{lem:Fu-Units-lin-form} to the left hand side of \eqref{eq:fu-lem-ieq1}, with $\alpha=\frac{\tilde a_2}{\tilde a_1}$ and $u_1/u_2=\zeta^{b_0}\eta_1^{b_1}\cdots\eta_{s-1}^{b_{s-1}}$. In particular, we have $h^*(\alpha)\leq 2A$. Let us denote by $\tilde B$ the maximum of the exponents of the fundamental units that appear in the decomposition of $u_1$ and $u_2$. Then we have due to Lemma \ref{lem:height-exp}
\begin{equation}\label{eq:B-est}
B=\max\{|b_1|,\dots,|b_s|\}\leq 2 \tilde B< \frac{2H}{C_1}.
\end{equation}

Let us assume for the moment that $B=\max\{|b_1|,\dots,|b_{s-1}|\}\leq \frac{es\omega}2$. Then we obtain by an application of Lemma \ref{lem:Fu-Units-lin-form} the inequality
$$
-\log \left\|\frac{\tilde a_2 u_2}{\tilde a_1u_1}-1\right\|_v\leq C_v2A(\log (es\omega/2)) R_S<2C_vAR_S\log(3sd)
$$
and together with \eqref{eq:fu-lem-ieq1} we get 
\begin{align*}
\log\|u_3\|_v &\geq \log\|u_1\|_v-2C_vAR_S\log(3sd)-2dA-\log 2\\
&> \log\|u_1\|_v-3C_vAR_s\log(3sd).
\end{align*}
Since we assume that $H\geq 3/C_1$ and since
$$\frac{3}{C_1}=\frac{27d((s-1)!)^2(\log(3d))^3}{2^s} \geq 3sd$$
we obtain the statement of the lemma in the case that $\max\{|b_1|,\dots,|b_{s-1}|\}\leq \frac{es\omega}2$.

Therefore we may assume that $\max\{|b_1|,\dots,|b_{s-1}|\}\geq \frac{es\omega}2$. By an application of Lemma \ref{lem:Fu-Units-lin-form} together with Inequality \eqref{eq:B-est} we get
\begin{equation}\label{eq:fu-lem-ieq2}
-\log \left\|\frac{\tilde a_2 u_2}{\tilde a_1u_1}-1\right\|_v\leq C_v2A(\log B) R_S<2C_vAR_S\log(2H/C_1)
\end{equation}
Combining the inequalities \eqref{eq:fu-lem-ieq1} and \eqref{eq:fu-lem-ieq2} yields
\begin{align*}
\log\|u_3\|_v &\geq \log\|u_1\|_v-2C_vAR_S\log(H/C_1)-2dA-\log 2\\
&> \log\|u_1\|_v-3AR_sC_v\log H
\end{align*}
provided that
\begin{equation}\label{eq:H-bound-I}
\log(H)> \log (2/C_1)+\frac{2dA+\log 2}{2AR_S C_v}.
\end{equation}
Let us note that by \eqref{eq:Reg-bound} it is easy to show that $R_SC_v\geq 1.6\cdot 10^4 d^2s^2$ for all $v\in S$ and therefore
$$\frac{2dA+\log 2}{2AR_S C_v}\leq \log(3/2).$$
Hence \eqref{eq:H-bound-I} is satisfied, if we we assume that $H>3/C_1$.
 \end{proof}
 
 For the proof of Theorem \ref{th:unit-rep} we need a slight variation of Lemma \ref{lem:4terms}. For this we fix some notation first. Let $\{\sigma_1,\dots,\sigma_d\}= \Hom_\Q(K,\C)$ be the set of all embedding from $K$ into $\C$ and let $K^{(i)}=\sigma_i(K)$ be the fields conjugate to $K$ embedded into $\C$. If $x\in K$ we write $x^{(i)}$ for $\sigma_i(x)$. We denote by $s-1$ the unit rank of the number field $K$, that is, $s$ is the number of archimedian places of $K$. Then we have the following lemma.
 
 \begin{lemma}\label{lem:4terms-conj}
 Given non zero integers $a_1,a_2$ with $\gcd(a_1,a_2)=1$ and $x,y\in U_K$ with
 $$a_1x^{(i)}+a_2y^{(i)}-a_1x^{(j)}-a_2y^{(j)}=0$$
 such that no subsum vanishes. For fixed indices $i,j\in \{1,\dots,d\}$, with $i\neq j$ we write
 $$X=\max\{|x^{(i)}|,|y^{(i)}|,|x^{(j)}|,|y^{(j)}|\}$$
 and 
 $$Y={\min}^{(2)}\{|x^{(i)}|,|y^{(i)}|,|x^{(j)}|,|y^{(j)}|\}.$$
 Let us write
 $$H=\max\{h(x),h(y)\} \quad\text{and}\quad A=\max\{1,\log|a_1|,\log |a_2|\}.$$
 Assume that $H>\max\{4esdC_2R_K,2/C_1\}$, then we have
 $$\log Y>\log X-\tilde C_3AR_K^2\log H, $$
 where 
 $$\tilde C_3=6.2\cdot 10^{12} s^{5.5}d^5\log(ed^2)((s-1)!)^4 \left(54(\log(3d))^3\right)^{2s-1}.$$
 \end{lemma}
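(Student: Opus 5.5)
The plan is to imitate the proof of Lemma \ref{lem:4terms} for an archimedean place, but to work in the compositum $L=K^{(i)}K^{(j)}\subseteq\C$. Its degree satisfies $D=[L:\Q]\leq d^2$. The price of this change is that the relevant multiplicative group is generated by the conjugates of two fundamental systems, which is where the factor $R_K^2$ and the term $\log(ed^2)$ come from.

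\textbf{Ordering and reduction to a two-term linear form.} Fix a fundamental system $\eta_1,\dots,\eta_{s-1}$ of $U_K$ as in Lemma \ref{lem:Dirichlet} and a primitive root of unity $\zeta$ of $K$. Write $x=\zeta^{b_0}\prod\eta_k^{b_k}$ and $y=\zeta^{c_0}\prod\eta_k^{c_k}$. Then
\[x^{(i)}=(\zeta^{(i)})^{b_0}\prod(\eta_k^{(i)})^{b_k},\]
and similarly for the other three terms.

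Relabel the four terms $x^{(i)},y^{(i)},x^{(j)},y^{(j)}$ as $u_1,\dots,u_4$ with $|u_1|\geq|u_2|\geq|u_3|\geq|u_4|$, so that $X=|u_1|$ and $Y=|u_3|$. The relation becomes $\tilde a_1u_1+\dots+\tilde a_4u_4=0$ with $\tilde a_l\in\{\pm a_1,\pm a_2\}$. Exactly as in \eqref{eq:fu-lem-ieq1}, we obtain
\[-\log\left|\frac{\tilde a_2u_2}{\tilde a_1u_1}-1\right|\geq \log X-\log Y-2A-\log 2,\]
using $|a_2/a_1|^{\pm1}\leq e^{A}$ since $a_1,a_2\in\Z$. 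The quantity on the left is nonzero, because otherwise the subsum $\tilde a_1u_1+\tilde a_2u_2$ would vanish.

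\textbf{Applying Matveev's bound.} The quotient $u_2/u_1$ is, in every case, a product of the numbers $\zeta^{(i)},\zeta^{(j)},\eta_k^{(i)},\eta_k^{(j)}$ for $k=1,\dots,s-1$. The cases are whether $u_1,u_2$ come from the same embedding or not, and whether they are conjugates of the same unit. In each case the exponents are bounded by $2B$, where $B$ is the maximal exponent of $x$ and $y$. The same $k$ may occur twice, but then the two exponents simply add.

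We apply Lemma \ref{lem:matveev} in $L$ with:
\begin{itemize}
\item the $N\leq 2s$ algebraic numbers $\alpha=\tilde a_2/\tilde a_1$, one root of unity generating $\langle\zeta^{(i)},\zeta^{(j)}\rangle$ (whose exponent is adjusted by the branch correction, as in Lemma \ref{lem:Fu-Units-lin-form}), and the $2(s-1)$ conjugate units;
\item $h_M(\alpha)\ll A$, since $\alpha$ is rational with $h(\alpha)\leq A$;
\item $h_M(\eta_k^{(l)})\leq \frac{3.2D}{\lambda(D)}h(\eta_k)$, by the analogue of \eqref{eq:mod-height-Matv-ieq} over $L$.
\end{itemize}
Statement (1) of Lemma \ref{lem:Dirichlet}, applied to each of the two conjugate systems, gives
\[\prod_{k,l}h(\eta_k^{(l)})\leq (c_1R_K)^2.\]
Hence $\Omega\ll A\,(c_1R_K)^2\,(3.2d^2/\lambda(d^2))^{2s-2}$, and Matveev's constant supplies the factors $(2s)^{5.5}$-type, $30^{2s+3}$ and $D^2\log(eD)\leq d^4\log(ed^2)$.

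\textbf{Bounding the exponents.} For the term $\log(eE)$ I use Lemma \ref{lem:height-exp}. From $h(x)<C_2BR_K$ and $H>4esdC_2R_K$ we get $B>4esd$, which is large enough to absorb the root-of-unity exponent and $e$, giving $\log(eE)\leq 2\log(2B)$. From $C_1B<H$ we get $B<H/C_1$. The assumption $H>2/C_1$ then lets me replace $\log(2B)$ by roughly $2\log H$, as in \eqref{eq:H-bound-I}.

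Combining these, we arrive at
\[\log X-\log Y\leq \tilde C_3AR_K^2\log H,\]
after absorbing $2A+\log 2$ using the lower bound \eqref{eq:Reg-bound} for $R_K$.

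\textbf{Main obstacle.} The hardest step is the bookkeeping of constants with $D=d^2$, $N=2s$ and the doubled regulator product. One must check that everything collapses into the stated $\tilde C_3$, in particular the factor $(54(\log 3d)^3)^{2s-1}$ and $((s-1)!)^4$ coming from $c_1^2$. A secondary point needing care is to treat $\zeta^{(i)}$ and $\zeta^{(j)}$ together as powers of one root of unity in $L$.
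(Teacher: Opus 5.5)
Your plan is essentially the paper's proof. The paper also takes the quotient of the two largest of the four conjugate terms, split into its Cases A, B, C, which your relabelling $u_1,\dots,u_4$ handles uniformly. It then applies Lemma~\ref{lem:matveev} in $K^{(i)}K^{(j)}$ with $N=2s$, $D\le d^2$ and $\Omega\ll A\,c_1^2R_K^2$, and controls the exponents through Lemma~\ref{lem:height-exp} and the two hypotheses on $H$.

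One caveat concerns your final bookkeeping, which you expect to collapse to $\tilde C_3$. Take your bound $h_M(\eta_k^{(l)})\le \frac{3.2D}{\lambda(D)}h(\eta_k)$ with $D=d^2$. Only half of the resulting powers of $d$ cancel against $c_1^2\propto (2d)^{-(2s-2)}$, so you are left with an extra factor of order $d^{2s-2}$ and will not land on $\tilde C_3$. The paper reaches $\tilde C_3$ by using $\frac{3.2d}{\lambda(d)}$ for each logarithm in \eqref{eq:omega-est}, that is, the degree of $K$ rather than that of the compositum. The definition of $h_M$ in Lemma~\ref{lem:matveev} does not strictly permit this once $d$ is large.
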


 \begin{proof}
 We follow the arguments given in the proof of \cite[Lemma 6]{Visser:2026}. Without loss of generality we may distinguish between the following three cases
 \begin{description}
  \item[Case A] We have $|x^{(i)}|,|y^{(i)}|\geq |x^{(j)}|\geq |y^{(j)}|$.
  \item[Case B] We have $|y^{(i)}|,|y^{(j)}|\geq |x^{(j)}|\geq |x^{(i)}|$.
  \item[Case C] We have $|x^{(i)}|,|y^{(j)}|\geq |x^{(j)}|\geq |y^{(i)}|$.
 \end{description}
Note that all other possible cases can be obtained by these three cases by exchanging the roles of $x$ and $y$ and/or exchanging the roles of $i$ and $j$. We will discuss Case C in detail since the proof for the other cases is similar and the other cases will yield smaller bounds for $\tilde C_3$.

Let $\zeta$ be a primitive root of unity and $\eta_1,\dots,\eta_{s-1}$ a fundamental system of units of $K$. Then we have
\begin{align*}
x^{(i)}&=\left(\zeta^{(i)}\right)^{t_0}\left(\eta_1^{(i)}\right)^{t_1}\cdots \left(\eta_{s-1}^{(i)}\right)^{t_{s-1}}\\
x^{(j)}&=\left(\zeta^{(j)}\right)^{t_0}\left(\eta_1^{(j)}\right)^{\xi_1}\cdots \left(\eta_{s-1}^{(j)}\right)^{t_{s-1}}\\
y^{(i)}&=\left(\zeta^{(i)}\right)^{u_0}\left(\eta_1^{(i)}\right)^{u_1}\cdots \left(\eta_{s-1}^{(i)}\right)^{u_{s-1}}\\
y^{(j)}&=\left(\zeta^{(j)}\right)^{u_0}\left(\eta_1^{(j)}\right)^{u_1}\cdots \left(\eta_{s-1}^{(j)}\right)^{u_{s-1}}.
\end{align*}
Let us assume for the moment that $\left|a_2y^{(j)}\right|\geq \left|a_1x^{(i)}\right|$. We consider the inequality
$$\left|\frac{a_1x^{(i)}}{a_2y^{(j)}}-1\right|=\left|\frac{a_1x^{(j)}+a_2y^{(i)}}{a_2y^{(j)}}\right|\leq \frac{2|a_1x^{(j)}|}{|a_2y^{(j)}|}.$$
Let us write $\Gamma=\frac{a_1x^{(i)}}{a_2y^{(j)}}$. If $|\Gamma-1|\geq 1/2$, then the statement of the lemma holds. Thus we may assume that $|\Gamma-1|<1/2$. But this implies that
\begin{equation}\label{eq:log-ieq}
|\log \Gamma|<2 \frac{2|a_1x^{(j)}|}{|a_2y^{(j)}|}.
\end{equation}
Let $\Lambda=\log \Gamma$, then we have
\begin{multline*}
\Lambda=\log \alpha+t_0\log \zeta^{(i)}+t_1\log \eta_1^{(i)}+\dots +t_{s-1}\log \eta_{s-1}^{(i)}\\- u_0\log \zeta^{(j)}-u_1\log \eta_1^{(j)}-\dots -u_{s-1}\log \eta_{s-1}^{(j)}- k i\pi,
\end{multline*}
where $\alpha=a_1/a_2$ and $k$ is some integer such that the imaginary part of $\Lambda$ is contained in the interval $(-\pi,\pi]$. That is, we have
\begin{multline*}
\Lambda=\log \alpha+\tilde k\log\zeta^{(i)}+t_1\log \eta_1^{(i)}+\dots +t_{s-1}\log \eta_{s-1}^{(i)}\\-u_1\log \eta_1^{(j)}-\dots -u_{s-1}\log \eta_{s-1}^{(j)},
\end{multline*}
with $\tilde k$ some integer with $|\tilde k|< (2(s-1)B+3)\frac{\omega}2$. We apply Matveev's Theorem to $\Lambda$. We note that $\Lambda$ is a linear form in $2s$ logarithms and all algebraic numbers come from the compositum $K^{(i)}K^{(j)}$ of degree $\leq d(d-1)<d^2$. Similar as in the proof of Lemma \ref{lem:Fu-Units-lin-form} we deduce that
\begin{equation}\label{eq:omega-est}
 \Omega\leq A\pi \left(\frac{3.2 d}{\lambda(d)}\right)^{2s-1} c_1^2 R_K^2
 \end{equation}
We also have 
\begin{equation}\label{eq:E-est}
eE=e\max\{|\tilde k|,|u_1|,\dots,|u_s|,|t_1|,\dots,|t_s|\}\leq e(2sB+3)\frac{\omega}2\leq B^2
\end{equation}
provided that $B> 4esd$. Indeed, we have $s<d$ and $\omega \leq 2d$, that is,
$$e(2sB+3)\frac{\omega}2<2esdB+3Bd<4esdB<B^2$$
unless $B\leq 4esd$.

Let us assume for the moment that $B\leq 4esd$. Then we have due to Lemma \ref{lem:height-exp} the upper bound $H\leq C_2BR_K\leq 4esdC_2R_K$.

Now, let us consider the case that $B\geq 4esd$. Hence, we obtain by an application of Matveev's lower bound for linear forms in logarithms (Lemma \ref{lem:matveev}) with $N=2s$, $D=d^2$ and the estimates \eqref{eq:omega-est} and \eqref{eq:E-est} the inequality
\begin{align*}
-|\log \Lambda|<&\frac 1{\kappa}\left(\frac {e2s}2\right)^\kappa 30^{2s+3} (2s)^{3.5}d^4 A\pi \left(\frac{3.2 d}{\lambda(d)}\right)^{2s-1} c_1^2 R_K^2\log(ed^2)2\log(B)\\
\leq &e^2 \pi 2^{3.5}s^{5.5} 30^{2s+5}d^4 \left(\frac{3.2 d}{\lambda(d)}\right)^{2s-1} c_1^2 \log (ed^2) AR_K^2\log B\\
=&e^22^{7.5}30^6\pi s^{5.5}d^5\log(ed^2)((s-1)!)^4\left(\frac{48}{\lambda(d)}\right)^{2s-1}AR_K^2\log B\\
<&\stackrel{:=\tilde C_4}{\overbrace{3.1\cdot 10^{12} s^{5.5}d^5\log(ed^2)((s-1)!)^4 \left(54(\log(3d))^3\right)^{2s-1}}}AR_K^2\log B
\end{align*}
On the other hand we have
$$|\log \Lambda|<\log\left(\frac{4|a_1x^{(j)}|}{|a_2y^{(j)}|}\right)<\log 4+\log \alpha +\log y-\log x$$
Since $B<H/C_1$ and $\log \alpha<A$ we obtain
$$\log y>\log x-\log \alpha-\tilde C_4 A R_K^2 \log (H/C_1)>\log x-2\tilde C_4 A R_K^2\log H$$
provided that
\begin{equation}\label{eq:H-bound-II}
\log H>\log(1/C_1)+\frac{2dA+\log 2}{\tilde C_4 AR_K^2}.
\end{equation}
Due to Friedman \cite{Friedman:1989} we know that $R_K>0.2$.
Hence $\tilde C_4R_K^2>10^{10} s^5d^5$ and
$$\frac{2dA+\log 2}{\tilde C_4 AR_K^2}<\log 2. $$
Therefore, Inequality \eqref{eq:H-bound-II} certainly holds, if we assume that $H>2/C_1$. That is the statement of the lemma holds with $\tilde C_3=2\tilde C_4$ in Case C.

In case that $|a_2y^{(j)}|< |a_1x^{(i)}|$ we obtain instead of $\Lambda$ the linear form $-\Lambda$ and therefore we obtain the same lower bound for $\log y$.

Let us have a quick look at the Cases A and B and let us start with Case A.
In this case we obtain by following the arguments for Case C the linear form
\begin{multline*}
\Lambda'=\log \alpha+(t_0-u_0)\log \zeta^{(i)}
+(t_1-u_1)\log \eta_1^{(i)}+\\
\dots +(t_{s-1}-u_{s-1})\log \eta_{s-1}^{(i)}-\tilde k i\pi
\end{multline*}
instead of $\Lambda$. Note that we have a linear form in $s+1$ logarithms instead of $2s$. Also note that although the coefficients of the $\log \eta_k^{(i)}$ may be twice as large as those in $\Lambda$ we only have half of the logarithms. That is, the corresponding constant $\tilde C_4'$ is smaller than $\tilde C_4$ and we obtain a sharper result in this case.

Finally, let us consider Case B. Following the arguments for Case C we obtain in this case the linear form
$$\Lambda''= t_0\log \left(\frac{\zeta^{(i)}}{\zeta^{(j)}}\right)+t_1\log\left(\frac{\eta_1^{(i)}}{\eta_1^{(j)}}\right)+\dots +t_{s-1}\log\left(\frac{\eta_{s-1}^{(i)}}{\eta_{s-1}^{(j)}}\right)-\tilde k i\pi$$
instead of $\Lambda$. Note that we have a linear form in $s+1$ logarithms instead of $2s$. Moreover, we have $h\left(\eta_k^{(i)}/\eta_k^{(j)}\right)\leq 2h(\eta_k)$ for $k=1,\dots,s-1$. Therefore, the corresponding constant $\tilde C_4''$ will be smaller than $\tilde C_4$. That is, we obtain again a sharper result in this case.
 \end{proof}

Finally, we state a useful lemma due to Peth\H{o} and de Weger \cite{Pethoe:1986}:

\begin{lemma}\label{lem:pdw}
 Let $u,v \geq 0, h \geq 1$ and $x \in \R$ be the largest solution of $x=u+v(\log{x})^h$. Then
$$
x<\max\{2^h(u^{1/h}+v^{1/h}\log(h^hv))^h, 2^h(u^{1/h}+2e^2)^h\}.
$$
\end{lemma}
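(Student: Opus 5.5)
The approach is to reduce the equation to a linear-in-logarithm inequality by taking $h$-th roots, and then argue by contradiction in two cases, according to whether $h^h v$ is larger or smaller than $1$.

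\textbf{Reduction.} Put $y=x^{1/h}$, so that $\log x=h\log y$ and the equation $x=u+v(\log x)^h$ becomes
$$y^h=u+h^hv(\log y)^h .$$
If $x\leq 1$ the claimed bound is trivial, since the right-hand side is at least $2^h(2e^2)^h>1$. So assume $x>1$, hence $y>1$ and $\log y>0$. Since $h\geq 1$, the map $t\mapsto t^{1/h}$ is subadditive on $[0,\infty)$. This gives
$$y\leq a+b\log y,\qquad a:=u^{1/h},\quad b:=h v^{1/h}.$$
Note that $\log(h^hv)=h\log b$. Hence $v^{1/h}\log(h^hv)=b\log b$, and the claim is equivalent to
$$y<\max\{2a+2b\log b,\;2a+4e^2\}.$$
It therefore suffices to prove $y<2a+2b\log b$ when $b>1$, and $y<2a+4e^2$ when $b\leq 1$. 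If $v=0$ then $y\leq a$ and there is nothing to prove, so we may take $b>0$.

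\textbf{Case $b>1$.} Suppose, for contradiction, that $y\geq 2a+2b\log b$. Then $a\leq (y-2b\log b)/2$. Inserting this into $y\leq a+b\log y$ gives
$$\frac y2\leq b\log y-b\log b=b\log(y/b).$$
With $r=y/b>0$ this reads $r/2\leq\log r$. This is impossible, because $r/2-\log r$ attains its minimum $1-\log 2>0$ at $r=2$.

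\textbf{Case $b\leq 1$.} Suppose, for contradiction, that $y\geq 2a+4e^2$. Then $a\leq y/2-2e^2$, and $y\leq a+b\log y$ yields
$$\frac y2+2e^2\leq b\log y\leq \log y .$$
This is again impossible, since $\log y<y/2$ for all $y>0$.

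Transforming back via $x=y^h$ gives the stated bound; since it holds for every solution, it holds in particular for the largest one. The only delicate points are the subadditivity step, which is where $h\geq 1$ is needed, and the choice of the split at $b=1$. The split ensures that the term $2b\log b$ is used only when it is positive, with the constant $2e^2$ covering the remaining case.
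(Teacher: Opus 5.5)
Your proof is correct. The paper does not prove this lemma itself; it only refers to \cite[Appendix B]{Smart:DiGl}, and the result goes back to \cite{Pethoe:1986}. There it is usually quoted for the largest root, under the extra hypothesis $v>(e^2/h)^h$, and with only the first term of the maximum. The second term is exactly the first one evaluated at that threshold: for $v=(e^2/h)^h$ one has $v^{1/h}\log(h^hv)=2e^2$, which explains its shape.

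Your route has two features worth noting. First, you never use that $x$ is the \emph{largest} solution. After the substitution $y=x^{1/h}$ and the subadditivity of $t\mapsto t^{1/h}$, a two-line contradiction bounds every solution, so you need no information about how $t\mapsto t-u-v(\log t)^h$ behaves beyond a root. Second, your Case $b>1$ argument only uses $b>0$ and $y>0$, since the contradiction $r/2\le\log r$ does not involve $b$. So it actually shows $x^{1/h}<2u^{1/h}+2v^{1/h}\log(h^hv)$ for every solution $x>1$ whenever $v>0$. Consequently your split at $b=1$ (or at $e^2$ in the classical version) is not needed. The second term $2^h(u^{1/h}+2e^2)^h$ of the maximum is only needed for the trivial situations $x\le 1$ or $v=0$.
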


For a proof of Lemma \ref{lem:pdw} we refer to \cite[Appendix B]{Smart:DiGl}.

\section{A bound for the number of solutions}\label{sec:number}

This section is devoted to the proof of Theorem \ref{th:number}. Therefore let us assume that we have $n$ solutions $(x_1,y_1),\dots,(x_n,y_n)\in U_S^2$ to \eqref{eq:SUeq}. In case that $(a_1,a_2,a_3)=(1,1,c)$ we additionally assume that for no indices $i\neq j$ we have $x_i=y_j$ and $x_j=y_i$. That is the two solutions $x+y=c$ and $y+x=c$ are counted only once.

We consider the following homogeneous system of $S$-unit equations
\begin{equation}\label{eq:DioEq-Sys}
 a_1 X_i+a_2 Y_i-a_1X_j-a_2Y_j=0 \qquad 1\leq i<j\leq n.
\end{equation}
Then $(x_1,y_1,\dots,x_n,y_n)\in U_S^{2n}$ is a solution to \eqref{eq:DioEq-Sys} such that no subsum vanishes.
Let us write 
$$X=\max_{i=1,\dots,n}\{h(x_i),h(y_i)\}$$
and for $v\in S$ we write 
$$X_v=\max_{i=1,\dots,n}\{\|x_i\|_v,\|y_i\|_v\}.$$
We also continue to write $A=\max\{h^*(a_1),h^*(a_2)\}$. Then an immediate consequence of Lemma \ref{lem:4terms} is the following lemma.

\begin{lemma}\label{lem:one-excep}
We have either 
$$X\leq 3/C_1$$
or for all $v\in S$ there is at most one $x$ out of $\{x_1,\dots,x_n,y_1,\dots,y_n\}$ that satisfies
$$\log \|x\|_v< \log X_v-2C_3(v)AR_S\log X.$$
 \end{lemma}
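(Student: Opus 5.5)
We argue by contradiction, using Lemma \ref{lem:4terms}. Assume $X>3/C_1$, fix $v\in S$, and suppose there are two distinct elements $z,w$ of $\{x_1,\dots,x_n,y_1,\dots,y_n\}$ with
$$\log\|z\|_v<\log X_v-2C_3(v)AR_S\log X \quad\text{and}\quad \log\|w\|_v<\log X_v-2C_3(v)AR_S\log X.$$
Pick an element $u$ attaining $X_v$, say $u\in\{x_k,y_k\}$. The aim is to find a single four-term equation from \eqref{eq:DioEq-Sys} that contains $u$, $z$ and $w$. That equation will have largest $v$-norm at least $X_v$ and second smallest $v$-norm below the threshold, and this contradicts Lemma \ref{lem:4terms}.

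The first step is to choose the pair of indices. Suppose $z$ and $w$ come from solutions $i$ and $j$ respectively.

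\emph{Case $i\neq j$.} We want an equation containing both small terms together with a large one. Take the equation for the pair $(i,j)$. Its terms are $x_i,y_i,x_j,y_j$, which include $z$ and $w$. The two small terms are then among the four, so the second smallest norm $y$ is less than $\|w\|_v$ (or $\|z\|_v$). The difficulty is that the maximum $x$ of this quadruple may be smaller than $X_v$.

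To handle this, use the relation $a_1x_i+a_2y_i=a_3$ at $v$. If $x_i$ is tiny, then $\|a_2y_i\|_v\approx\|a_3\|_v$. Applying the same reasoning to solution $k$ shows $\|a_3\|_v$ is comparable to $X_v$ up to factors $e^{O(dA)}$. So I would instead apply Lemma \ref{lem:4terms} first to the pair $(i,k)$. It gives that at most one term of $\{x_i,y_i,x_k,y_k\}$ lies below $\log X_v - C_3AR_S\log H$. Here $H\le X$, since $H$ is a maximum over four of the heights entering $X$.

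Now suppose both $x_i$ and $y_i$ were small (the case where $z,w$ come from the same solution, $i=j$). Taking the pair $(i,k)$ for any $k\neq i$, with $u$ in solution $k$, gives an immediate contradiction with Lemma \ref{lem:4terms}. This requires $n\ge2$; if $n=1$ the statement is trivial anyway, since then only $x_1,y_1$ exist and the $v$-norm relation $a_1x_1+a_2y_1=a_3$ excludes both being small only up to constants. I would handle $n=1$ separately, or note the lemma is only needed for $n\ge 2$.

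In the case $i\neq j$ with $u$ in a third solution $k$, I would chain the argument. The pair $(i,k)$ shows that $x_i$ or $y_i$ has norm at least $X_ve^{-C_3AR_S\log X}$. Hence the maximum of the quadruple $\{x_i,y_i,x_j,y_j\}$ satisfies $\log x\ge\log X_v-C_3AR_S\log X$. Applying Lemma \ref{lem:4terms} to the pair $(i,j)$ then gives
$$\log\min\nolimits^{(2)}>\log X_v-2C_3AR_S\log X.$$
But $\min^{(2)}\le\max\{\|z\|_v,\|w\|_v\}$, which contradicts the assumption. This doubling is exactly why the constant in the statement is $2C_3(v)$. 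If $k\in\{i,j\}$, a single application of Lemma \ref{lem:4terms} suffices.

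Throughout we need the hypotheses of Lemma \ref{lem:4terms}: no vanishing subsums, which is noted after \eqref{eq:DioEq-Sys}, and $H>3/C_1$. The latter is the main obstacle, because $H$ is a local maximum over four heights, not $X$. I would first try to guarantee $H>3/C_1$ by choosing the pairs so that they include the solution achieving $X$. If that fails, $H\le 3/C_1$ for the relevant quadruple, and the bound on $\log x-\log y$ must be obtained directly from the small-exponent branch in the proof of Lemma \ref{lem:4terms}, which gives $3C_vAR_S\log(3sd)\le C_3AR_S\log X$ because $X>3/C_1\ge3sd$. I expect this to close the gap.
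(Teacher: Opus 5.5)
Your argument is correct and is essentially the paper's: both chain two applications of Lemma~\ref{lem:4terms}, the first (against the solution containing the element attaining $X_v$) pushing the partner of a small element up to $\log X_v-C_3(v)AR_S\log X$, the second using that partner as a substitute maximum (which is where the factor $2$ comes from), and the paper differs only in arguing directly from the global minimum rather than by contradiction. On the hypothesis $H>3/C_1$, which the paper passes over silently, your remedy via the small-exponent branch does not work as stated, since $H\le 3/C_1$ only gives $B<2H/C_1\le 6/C_1^2$ rather than $B\le es\omega/2$; the correct fix is that the proof of Lemma~\ref{lem:4terms} uses $H$ only through $B<2H/C_1$ and $H>3/C_1$, so it runs verbatim with $H$ replaced by $X$.
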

 
 \begin{proof}
  Let us assume that $X>3/C_1$ and without loss of generality we may assume that $\|x_1\|_v=X_v$. Let us assume for the moment that $\|y_1\|_v \leq \log X_v-C_3(v)AR_S\log X$. Now for any $j=2,\dots,n$ we consider the equation
  $$a_1x_1+a_2y_1=a_3=a_1x_j+a_2y_j$$
  and due to Lemma \ref{lem:4terms} we deduce that 
  $$\log\|x_j\|_v,\log\|y_j\|_v\geq \log X_v-C_3(v)AR_S\log X$$
  for all $j=2,\dots,n$. This proves a slightly sharper result, than stated in the lemma.
  
  Without loss of generality we may assume that $\|y_n\|_v=\min_{i}\{\|x_i\|_v,\|y_i\|_v\}$ with $n\neq 1$. We consider the equation
  $$a_1x_1+a_2y_1=a_3=a_1x_n+a_2y_n$$
  and due to Lemma \ref{lem:4terms} we deduce that
   $$\log\|x_n\|_v\geq \log X_v-C_3(v)AR_S\log X.$$
   Now, we consider the equations
  $$a_1x_j+a_2y_j=a_3=a_1x_n+a_2y_n$$
  for all $j=1,\dots, n-1$ and deduce that
  \begin{align*}
  \log\|x_j\|_v,\log\|y_j\|_v&\geq\log \|x_n\|_v-C_3(v)AR_S\log X\\
  &\geq \log X_v-2C_3(v)AR_S\log X
  \end{align*}
  holds for all $j=2,\dots,n$ which proves the lemma.
  \end{proof}

 If $x$ is an $S$-unit, then with $(x_1,y_1,\dots,x_n,y_n)$ also $(xx_1,xy_1,\dots,xx_n,xy_n)$ is a solution to \eqref{eq:DioEq-Sys}. That is we may assume that $x_1=1$ by choosing $x=x_1^{-1}$. Next, we prove a result that is also of independent interest.

\begin{proposition}\label{prop:sol-bound}
  Assume that $(x_1,y_1,\dots,x_n,y_n)\in U_S^{2n}$ is a solution to \eqref{eq:DioEq-Sys} with $x_1=1$ such that no subsum vanishes. If $|S|\leq 2n-1$, then we have
 $$
  X=\max_{i=1,\dots,n}\{h(x_i),h(y_i)\}< 2C_4 R_SA\log(C_4R_S A),$$
   with
  $$C_4=\frac{2}d \sum_{v\in S} C_3(v).$$
\end{proposition}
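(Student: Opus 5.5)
The plan is to combine Lemma \ref{lem:one-excep} with the product formula, using that $x_1=1$ has height zero. Assume $X>3/C_1$; otherwise the bound is trivial, since the right-hand side exceeds $3/C_1$ for our constants. Put $\delta_v:=2C_3(v)AR_S\log X$. By Lemma \ref{lem:one-excep}, for each $v\in S$ all but at most one of the $2n$ numbers $x_1,\dots,x_n,y_1,\dots,y_n$ satisfy
$$\log\|z\|_v\ge \log X_v-\delta_v .$$
Call the (at most one) exceptional element at $v$ the \emph{bad} element for $v$. There are at most $|S|\le 2n-1<2n$ bad elements in total, so some element $z^*$ among the $2n$ is good at every place $v\in S$.

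The first key step uses $z^*$. Since $z^*\in U_S$, the product formula restricted to $S$ gives $\sum_{v\in S}\log\|z^*\|_v=0$. Good at every place means $\log X_v\le \log\|z^*\|_v+\delta_v$ for all $v\in S$. Summing over $v\in S$ yields
$$\sum_{v\in S}\log X_v\le \sum_{v\in S}\delta_v=2AR_S\log X\sum_{v\in S}C_3(v).$$

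The second step bounds $h(z)$ for an arbitrary element $z$ of the list. We have $d\,h(z)=\sum_{v\in S}\max\{0,\log\|z\|_v\}$, and each summand is at most $\max\{0,\log X_v\}$. We also need $\log X_v\ge0$ for all $v$, and here the normalization $x_1=1$ enters: $X_v\ge\|x_1\|_v=1$. Hence
$$dX\le\sum_{v\in S}\log X_v\le 2AR_S\log X\sum_{v\in S}C_3(v),$$
that is, $X\le C_4R_SA\log X$ with $C_4=\frac2d\sum_{v}C_3(v)$.

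The last step is routine. Apply Lemma \ref{lem:pdw} with $u=0$, $h=1$, $v=C_4R_SA$, or argue directly: $x\le c\log x$ with $c\ge e$ implies $x<2c\log c$. This gives $X<2C_4R_SA\log(C_4R_SA)$; the lower bounds on $R_S$ and $C_v$ guarantee that $c$ is large.

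The main obstacle is the pigeonhole step. Lemma \ref{lem:one-excep} must deliver at most one exception per place uniformly for the whole $2n$-tuple; as stated it does. With that, the condition $|S|\le 2n-1$ is exactly what forces an element that is good everywhere. A minor point is that the lemma's threshold involves $\log X$ with $X$ the global maximum of heights. This is consistent here because after rescaling by $x_1^{-1}$ the tuple is still a solution with no vanishing subsums, and $X$ denotes the heights of the rescaled tuple. We should also check that $X>3/C_1$ is compatible with the final bound, which holds since $2C_4R_SA\log(C_4R_SA)\ge 3/C_1$ for these constants.
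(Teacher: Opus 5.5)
Your proposal is correct and matches the paper's proof step for step. Both use Lemma~\ref{lem:one-excep} with pigeonhole (since $|S|\le 2n-1$) to get an element that is good at every $v\in S$, then the product formula for that element, then $x_1=1$ to ensure $\log X_v\ge 0$ so that $\sum_{v\in S}\log X_v\ge d\,h(z)=dX$, and finally Lemma~\ref{lem:pdw} to turn $X\le C_4R_SA\log X$ into $X<2C_4R_SA\log(C_4R_SA)$.
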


\begin{proof}
It is easy to see that $2C_4 R_SA\log(C_4R_S A)>3/C_1$. Therefore we may assume that $X\geq 3/C_1$. By the pigeonhole principle combined with Lemma \ref{lem:one-excep} there is one $x\in \{x_1,\dots,x_n,y_1,\dots,y_n\}$ such that
 $$\log \|x\|_v\geq \log X_v-2C_3(v)AR_S\log X$$
 for all $v\in S$. Since by the product formula we have 
 \begin{align*}
 0&=\sum_{v\in S} \log \|x\|_v\\
 &\geq \sum_{v\in S}\log X_v -\sum_{v\in S}2C_3(v)AR_S\log X\\
 &=\sum_{v\in S}\log X_v -dC_4AR_S\log X.
 \end{align*}
 Now, let $z\in \{x_1,\dots,x_n,y_1,\dots,y_n\}$ be such that $h(z)=X$. Since we assume that $x_1=1$, we have $\log X_v\geq 0$ for all $v\in S$. Therefore, we have
 $$dC_4AR_S\log X\geq \sum_{v\in S}\log X_v\geq \sum_{v\in S} \max\{\log\|z\|_v,0\}= dh(z)=dX.$$
 Due to Lemma \ref{lem:pdw} we deduce that
 $$X<2C_4 R_SA\log(C_4R_S A).$$
\end{proof}

Let us assume that we have $n\geq \frac{|S|+1}2$ solutions $(x_1,y_1),\dots,(x_n,y_n)\in U_S^2$ to $S$-unit Equation \eqref{eq:SUeq}. In case that $(a_1,a_2,a_3)=(1,1,c)$ we additionally assume that for no indices $i\neq j$ we have $x_i=y_j$ and $x_j=y_i$. That is $(x_1,y_1,\dots,x_n,y_n)\in U_S^{2n}$ is a solution to \eqref{eq:DioEq-Sys} such that no subsum vanishes. Then we deduce by an application of Proposition \ref{prop:sol-bound} that $\mathbf x=(1,x_1^{-1}y_1,\dots,x_1^{-1}x_n,x_1^{-1}y_n)$ is a solution to  \eqref{eq:DioEq-Sys} with
$$X=\max\left\{h(x_1^{-1}y_1),\cdots,h(x_1^{-1}x_n),h(x_1^{-1}y_n) \right\}\leq 2C_4 R_SA\log(C_4R_S A).$$
That is, we have
$$a_1x_1^{-1}x_1+a_2x_1^{-1}y_1=a_3x_1^{-1}$$
and therefore
\begin{align*}
h(a_3)&\leq h(a_3 x_1^{-1})\\
&=h(a_1x_1^{-1}x_1+a_2x_1^{-1}y_1)\\
&\leq h(a_1)+h(a_2)+h(x_1^{-1}y_1)+\log 2\\
&\leq 2A+\log 2+2C_4 R_SA\log(C_4R_S A)\\
&<3C_4 R_SA\log(C_4R_S A).
\end{align*}
which proves Theorem \ref{th:number} up to the explicit computation of $C=C_4$.

Since \eqref{eq:Cv-bound} we can choose
\begin{multline*}
C_4= \frac{2}d \sum_{v\in S} C_3(v)\leq \frac{6}d \sum_{v\in S}C_v\\
\leq 2.3\cdot 10^8 d (s+1)^{6.5}(\log(2(s+1)d))^2 P^d((s-1)!)^2\left(128e^2 d^2\right)^s.
\end{multline*}

\section{Integers that are sums of two units}\label{sec:units}

Let us assume that $a,b\in \Z$ with $\gcd(a,b)=1$. Furthermore, assume that
$\mathbf x=(x,y)\in U_K$ is a solution to
\begin{equation}\label{eq:sum-of-units}
 ax+by=n
\end{equation}
Let us write $M=\Q(x)$ and let $\ell=[M:\Q]$. Since $y=\frac{n-ax}b$ we have $M=\Q(x)=\Q(y)$. Let $\{\tilde \sigma_1,\dots,\tilde \sigma_\ell\}=\Hom_{\Q}(M,\C)$ be the set of all embeddings of $M$ into $\C$ and let $\sigma_1,\dots,\sigma_\ell$ be fixed embeddings of $K$ into $\C$ with $\sigma_i|_M=\tilde \sigma_i$ for $i=1,\dots,\ell$. Let us write $\alpha^{(i)}=\sigma_i(\alpha)$ for $\alpha\in K$. Then we have
\begin{equation}\label{eq:unit-eq-conj}
ax^{(i)}+by^{(i)}-ax^{(j)}-by^{(j)}=0,\qquad i\neq j
\end{equation}
and no subsum vanishes provided that
\begin{itemize}
 \item $a\neq b$ or
 \item $a=b$ and $\sigma(x)\neq y$ for all $\sigma \in \Gal(K/\Q)$.
\end{itemize}
Indeed, since $x,y$ are units $a\sigma_i(x)= b\sigma_j(y)$ can only happen if $a=b=1$. In that case we have $\sigma_i(x)=\sigma_j(y)$, which can only happen if $\sigma(x)=y$ for some $\sigma\in \Gal(K/\Q)$.

Let us assume that $ax+by=n$. Assume for the moment that
$$\max\{h(x),h(y)\}\leq \max\{4esdC_2R_K,2/C_1\}. $$
Then we have
\begin{align*}
\log|n|=h(n)&\leq h(ax+by)\\
&\leq 2A+\log 2 +2 \max\{4esdC_2R_K,2/C_1\}\\
&\leq 5\tilde CAR_K^2\log\left(\tilde CAR_K^2\right)
\end{align*}
which proves Theorem \ref{th:unit-rep} in this case.

Let us assume that
$$\max\{h(x),h(y)\}> \max\{4esdC_2R_K,2/C_1\}. $$
Also in view of Theorem \ref{th:unit-rep} we also may assume that $|n|\geq a+b$. Let $(x,y)\in U_K^2$ be a solution to $ax+by=n$.
Then $(x,y)$ is also solution to \eqref{eq:unit-eq-conj}. Let us assume that no subsum of \eqref{eq:unit-eq-conj} vanishes, i.e. one of the two statements listed above holds.
 Let us write
 $$
 X^{(i)}=\max\left\{\left|x^{(i)}\right|,\left|y^{(i)}\right|\right\}.
 $$
 Since $|n|\geq a+b$ we have $X^{(i)}\geq 1$ for all $1\leq i\leq \ell$.
 Let us write $X=\max\{h(x),h(y)\}$. Further, let us assume that
 $$
 \left|y^{(\ell)}\right|=\min_{i=1,\dots,\ell}\left\{\left|x^{(i)}\right|,\left|y^{(i)}\right|\right\}.
 $$
 Note that by exchanging the roles of $a$ and $b$ and $x$ and $y$ respectively we can assume without loss of generality that the minimum is attained for one of the conjugates of $y$.
 
 With these notations and assumptions we obtain from Lemma \ref{lem:4terms-conj} the inequality
 $$\log\left|x^{(i)}\right|\geq X^{(i)}-\tilde C_3AR_K^2\log X$$
 for all $i=1,\dots,\ell$ and therefore
 $$
 0=\sum_{i=1}^{\ell}\log\left|x^{(i)}\right|\geq \sum_{i=1}^{\ell} X^{(i)}-\ell\tilde C_3A\log X\geq \sum_{i=1}^\ell X^{(i)}-d\tilde C_3A\log X.
 $$
Let $z\in \{x,y\}$ be such that $h(z)=\max\{h(x),h(y)\}$. Then we obtain the inequality
 $$d\tilde C_3A\log X\geq \sum_{i=1}^\ell X^{(i)}\geq \sum_{i=1}^\ell \max\left\{\log \left|z^{(i)}\right|,0\right\}=\ell h(z)\geq X.$$
 Therefore we have due to Lemma \ref{lem:pdw} the inequality
 $$X\leq 2d\tilde C_3 A R_K^2\log (d\tilde C_3 A R_K^2).$$
 Hence,
 \begin{align*}
 \log |n|=h(n)=h(ax+by)&\leq h(a)+h(b)+2X+\log2\\
 &\leq 5d\tilde C_3 A R_K^2 \log (d\tilde C_3 A R_K^2).
 \end{align*}
 That is, $|n|$ is bounded from above or $a=b=1$ and $n=x+y$ with $y=\sigma(x)$ for some $\sigma \in \Gal(K/\Q)$ which proves Theorem \ref{th:unit-rep} with
 $$\tilde C= d\tilde C_3=6.2\cdot 10^{12} s^{5.5}d^6\log(ed^2)((s-1)!)^4 \left(54(\log(3d))^3\right)^{2s-1}. $$

\section*{Acknowlegement}
We gratefully acknowledge Kálmán Győry for his valuable suggestions and helpful comments.

%\bibliographystyle{abbrv}
%\bibliography{units}

\end{document}